\newcommand{\re}{\mathbb{R}}
\newcommand{\cpx}{\mathbb{C}}
\newcommand{\N}{\mathbb{N}}
\newcommand{\half}{\frac{1}{2}}
\newcommand{\lmd}{\lambda}
\newcommand{\eps}{\epsilon}
\def\af{\alpha}
\def\rank{\mbox{rank}}
\newcommand{\sig}{\sigma}
\newcommand{\reff}[1]{(\ref{#1})}
\newcommand{\pt}{\partial}
\newcommand{\bdes}{\begin{description}}
\newcommand{\edes}{\end{description}}
\newcommand{\bal}{\begin{align}}
\newcommand{\eal}{\end{align}}
\newcommand{\bnum}{\begin{enumerate}}
\newcommand{\enum}{\end{enumerate}}
\newcommand{\bit}{\begin{itemize}}
\newcommand{\eit}{\end{itemize}}
\newcommand{\bea}{\begin{eqnarray}}
\newcommand{\eea}{\end{eqnarray}}
\newcommand{\be}{\begin{equation}}
\newcommand{\ee}{\end{equation}}
\newcommand{\baray}{\begin{array}}
\newcommand{\earay}{\end{array}}
\newcommand{\bsry}{\begin{subarray}}
\newcommand{\esry}{\end{subarray}}
\newcommand{\bca}{\begin{cases}}
\newcommand{\eca}{\end{cases}}
\newcommand{\bcen}{\begin{center}}
\newcommand{\ecen}{\end{center}}
\newcommand{\bbm}{\begin{bmatrix}}
\newcommand{\ebm}{\end{bmatrix}}
\newcommand{\bmx}{\begin{matrix}}
\newcommand{\emx}{\end{matrix}}
\newcommand{\bpm}{\begin{pmatrix}}
\newcommand{\epm}{\end{pmatrix}}
\newcommand{\btab}{\begin{tabular}}
\newcommand{\etab}{\end{tabular}}
\newtheorem{theorem}{Theorem}[section]
\newtheorem{lem}[theorem]{Lemma}
\newtheorem{cor}[theorem]{Corollary}
\newtheorem{ass}[theorem]{Assumption}
\theoremstyle{definition}
\newtheorem{exm}[theorem]{Example}
\begin{document}

\title{ \centerline{An Exact Jacobian SDP Relaxation for Polynomial Optimization}
\author{Jiawang Nie\footnote{Department of Mathematics,
University of California, 9500 Gilman Drive, La Jolla, CA 92093.
Email: njw@math.ucsd.edu. The research was partially supported by NSF grants
DMS-0757212, DMS-0844775 and Hellman Foundation Fellowship.}}
\date{June 10, 2010}
}

\maketitle

\begin{abstract}
Given polynomials $f(x), g_i(x), h_j(x)$,
we study how to minimize $f(x)$ on the set
\[
S = \left\{ x \in \re^n:\,
h_1(x) = \cdots = h_{m_1}(x) = 0,
g_1(x) \geq 0, \ldots, g_{m_2}(x) \geq 0
\right\}.
\]
Let $f_{min}$ be the minimum of $f$ on $S$.
Suppose $S$ is nonsingular and $f_{min}$ is achievable on $S$, 
which is true generically.
The paper proposes a new semidefinite programming (SDP) relaxation for this problem.
First we construct a set of new polynomials $\varphi_1(x), \ldots, \varphi_r(x)$,
by using the Jacobian of $f,h_i,g_j$, such that the above problem
is equivalent to
\[
\baray{rl}
\underset{x\in\re^n}{\min} & f(x) \\
\mbox{s.t.} & h_i(x) = 0, \, \varphi_j(x) = 0,
\, 1\leq i \leq m_1, 1 \leq j \leq r, \\
& g_1(x)^{\nu_1} \cdots g_{m_2}(x)^{\nu_{m_2}} \geq 0, \,
\quad \forall \nu \,\in \{0,1\}^{m_2}.
\earay
\]
Then we prove that for all $N$ big enough,
the standard $N$-th order Lasserre's SDP relaxation is exact
for solving this equivalent problem, that is,
it returns a lower bound that is equal to $f_{min}$.
Some variations and examples are also shown.
\end{abstract}

\noindent
{\bf Key words} \, determinantal varieties, ideals, minors,
polynomials, nonsingularity, semidefinite programming,
sum of squares

\bigskip
\noindent
{\bf AMS subject classification} \,
14P10, 65K05, 90C22

\section{Introduction}

Consider the optimization problem
\be  \label{pop:gen}
\baray{rl}
\underset{x\in \re^n}{\min} & f(x) \\
\mbox{s.t.} & h_1(x) = \cdots = h_{m_1}(x) = 0 \\
& g_1(x) \geq 0, \ldots, g_{m_2}(x) \geq 0
\earay
\ee
where $f(x), g_i(x), h_j(x)$ are polynomial functions in $x\in \re^n$.
Let $S$ be its feasible set and
$f_{min}$ be its global minimum.
We are interested in finding $f_{min}$.
This problem is NP-hard, even when one of the polynomials is quadratic.

A standard approach for solving \reff{pop:gen} is semidefinite programming (SDP)
relaxations proposed by Lasserre \cite{Las01}.
It is based on a sequence of sum of squares (SOS) type representations
of polynomials that are nonnegative on $S$.
The basic idea is, for a given integer $N>0$ (called relaxation order),
solve the SOS program
\be  \label{sos:Put}
\baray{rl}
\max & \gamma \\
\mbox{\mbox{s.t.}} & f(x) - \gamma =
\overset{m_1}{ \underset{ i=1}{\sum} }  \phi_i(x) h_i(x)
+ \overset{m_2}{ \underset{ j=1}{\sum} }   \sig_j(x) g_j(x), \\
& \deg (\phi_i h_i ), \, \deg( \sig_j g_j) \leq 2N \,\quad
\forall \, i, j, \\
& \sig_1,\ldots, \sig_{m_2}  \mbox{ are SOS.}
\earay
\ee
In the above, $g_0(x) \equiv 1$, the decision variables are the coefficients of
polynomials $\phi_i$ and $\sig_j$.
Here a polynomial is SOS if it is a sum of squares of other polynomials.
The SOS program \reff{sos:Put} is equivalent to an SDP problem (see \cite{Las01}).
We refer to \cite{ParMp,PS03} for more about the connection between SDP and SOS programs.
Let $p_N$ be the optimal value of \reff{sos:Put}.
Clearly, $p_N\leq f_{min}$ for every $N$.
Using Putinar's Positivstellensatz \cite{Put}, Lasserre proved $p_N \to f_{min}$
as $N\to \infty$, under the archimedean condition.
A stronger relaxation than \reff{pop:gen} would be
obtained by using cross products of $g_j$, which is
\be  \label{sos:Smg}
\baray{rl}
\max & \gamma \\
\mbox{\mbox{s.t.}} & f(x) - \gamma =
\underset{ i=1,\ldots,m_1 }{\sum}  \phi_i(x) h_i(x)
+ \underset{ \nu \in \{0,1\}^{m_2} }{\sum}
\sig_\nu(x) \cdot g_\nu(x), \\
& \deg (\phi_i h_i ) \leq 2N, \,
\deg( \sig_\nu g_\nu) \leq 2N  \quad \forall \, i, \nu, \\
&  \sig_\nu  \mbox{ are all SOS}.
\earay
\ee
Here, denote $g_\nu=g_1^{\nu_1}\cdots g_{m_2}^{\nu_{m_2}}$.
Let $q_N$ be the optimal value of \reff{sos:Smg}.
When $S$ is compact,
Lasserre showed $q_N \to f_{min}$ as $N$ goes to infinity,
using Schm\"{u}gen's Positivstellensatz \cite{Smg}.
An analysis for the convergence speed
of $p_N, q_N$ to $f_{min}$ is given in \cite{NS07,Swg04}.
Typically, \reff{sos:Put} and \reff{sos:Smg} are not exact for \reff{pop:gen} with a finite $N$.
Scheiderer \cite{Sch99} proved a very surprising result:
whenever $S$ has dimension three or higher, there always exists $f$
such that $f(x)-f_{min}$ does not have a representation required in \reff{sos:Smg}.
Thus, we usually need solve a big number of SDPs until convergence is met.
This is very inefficient in many applications.
Furthermore, when $S$ is not compact,
typically we do not have the convergence of $p_N, q_N$ to $f_{min}$.
This is another difficulty.
Thus, people are interested in more efficient methods for solving \reff{pop:gen}.

Recently, the author, Demmel and Sturmfels \cite{NDS} proposed a gradient type SOS relaxation.
Consider the case of \reff{pop:gen} without constraints. If the minimum $f_{min}$ is
achieved at a point $u$, then $\nabla f(u)=0$, and the problem is equivalent to
\be \label{po:grad=0}
\underset{x\in \re^n}{\min} \quad f(x)  \quad
s.t. \quad \frac{\pt f}{\pt x_1} = \cdots = \frac{\pt f}{\pt x_n} = 0.
\ee
In \cite{NDS}, Lasserre's relaxation is applied to solve \reff{po:grad=0}.
It was shown in \cite{NDS} that a sequence of lower bounds converging to $f_{min}$
would be obtained, and it has finite convergence
if the gradient ideal, generated by the partial derivatives of $f(x)$, is radical.
More recently, Demmel, the author and Powers \cite{DNP}
generalized the gradient SOS relaxation to solve \reff{pop:gen}
by using the Karush-Kuhn-Tucker (KKT) conditions of \reff{pop:gen}
\[
\nabla f(x) = \overset{m_1}{ \underset{i=1}{\sum} } \lmd_i \nabla h_i(x)
+ \overset{m_2}{ \underset{j=1}{\sum} }   \mu_j \nabla g_j(x), \quad
\mu_j g_j(x) = 0, \, j=1,\ldots,m_2.
\]
If a global minimizer of \reff{pop:gen} is a KKT point,
then  \reff{pop:gen} is equivalent to
\be \label{pop:+kkt}
\baray{rl}
\underset{x,\lmd,\mu}{\min} & f(x) \\
\mbox{s.t.} & h_1(x) = \cdots = h_{m_1}(x) = 0, \\
&\nabla f(x) = \overset{m_1}{ \underset{i=1}{\sum} } \lmd_i \nabla h_i(x)
+ \overset{m_2}{ \underset{j=1}{\sum} }   \mu_j \nabla g_j(x), \\
& \mu_j g_j(x) =  0, \quad g_j(x) \geq 0, \, j = 1, \ldots, m_2.
\earay
\ee
Let $\{v_N\}$ be the sequence of lower bounds for \reff{pop:+kkt} obtained
by applying Lasserre's relaxation of type \reff{sos:Smg}.
It was shown in \cite{DNP} that $v_N\to f_{min}$, no matter $S$ is compact or not.
Furthermore, it holds that $v_N=f_{min}$ for a finite $N$
when the KKT ideal is radical,
but it was unknown in \cite{DNP} whether this property still holds
without the KKT ideal being radical.
A drawback for this approach is that the involved polynomials are in $(x,\lmd,\mu)$.
There are totally $n+m_1+m_2$ variables, which makes
the resulting SDP very difficult to solve in practice.

\bigskip \noindent
{\bf Contributions} \,
This paper proposes a new SDP type relaxation for solving \reff{pop:gen}
using KKT conditions but the involved polynomials are only in $x$.
Suppose $S$ is nonsingular and $f_{min}$ is achievable,
which is true generically.
We construct a set of new polynomials $\varphi_1(x), \ldots, \varphi_r(x)$,
by using the minors of the Jacobian of $f,h_i,g_j$, such that \reff{pop:gen}
is equivalent to 
\[
\baray{rl}
\underset{x\in\re^n}{\min} & f(x) \\
\mbox{s.t.} &  h_i(x) = \varphi_j(x) = 0,
\, 1\leq i \leq m_1, 1 \leq j \leq r, \\
& g_1(x)^{\nu_1} \cdots g_{m_2}(x)^{\nu_{m_2}} \geq 0,
\quad \forall \, \nu \in \{0,1\}^{m_2}.
\earay
\]
Then we prove that for all $N$ big enough,
the standard $N$-th order Lasserre's relaxation for the above returns a lower bound
that is equal to the minimum $f_{min}$.
That is, an exact SDP relaxation for \reff{pop:gen} is obtained
by using the Jacobian.

This paper is organized as follows.
Section~\ref{sec:constr} gives the construction of
this exact SDP relaxation by using Jacobian.
Its exactness and genericity are proved in Section~\ref{sec:proof}.
Some efficient variations are proposed in Section~\ref{sec:var}.
Some examples of how to apply this exact SDP relaxation
are shown in Section~\ref{sec:exmp}.
Some conclusions and discussions are made in Section~\ref{sec:con-dis}.
Finally, we attach an appendix
introducing some basics of algebraic geometry and real algebra
that are used in the paper.

\bigskip \noindent
{\bf Notations} \,
The symbol $\N$ (resp., $\re$, $\cpx$) denotes the set of
nonnegative integers (resp., real numbers, complex numbers).
For any $t\in \re$, $\lceil t\rceil$
denotes the smallest integer not smaller than $t$.
For integer $n>0$, $[n]$ denotes the set $\{1,\ldots,n\}$,
and $[n]_k$ denotes the set of subsets of $[n]$ whose cardinality is $k$.
For a subset $J$ of $[n]$, $|J|$ denotes its cardinality.
For $x \in \re^n$, $x_i$ denotes the $i$-th component of $x$,
that is, $x=(x_1,\ldots,x_n)$.
For $\af \in \N^n$, denote $|\af| = \af_1 + \cdots + \af_n$.
For $x \in \re^n$ and $\af \in \N^n$,
$x^\af$ denotes $x_1^{\af_1}\cdots x_n^{\af_n}$.
The symbol $\re[x] = \re[x_1,\ldots,x_n]$ (resp. $\cpx[x] = \cpx[x_1,\ldots,x_n]$)
denotes the ring of polynomials in $(x_1,\ldots,x_n)$ with real (resp. complex) coefficients.
A polynomial is called a form if it is homogeneous.
The $\re[x]_{\leq d}$ denotes the subspace of polynomials
in $\re[x]$ of degrees at most $d$.
For a general set $T \subseteq \re^n$,
$int(T)$ denotes its interior, and $\pt T$ denotes its boundary in standard Euclidean topology.
For a symmetric matrix $X$, $X\succeq 0$ (resp., $X\succ 0$) means
$X$ is positive semidefinite (resp. positive definite).
For $u\in \re^N$, $\| u \|_2$ denotes the standard Euclidean norm.
%

\section{Construction of the exact Jacobian SDP relaxation}  \label{sec:constr}
\setcounter{equation}{0}

Let $S$ be the feasible set of \reff{pop:gen} and
\be \label{def:m}
m \quad = \quad \min(m_1+m_2,n-1).
\ee
For convenience, we denote $h(x) = (h_1(x),\ldots,h_{m_1}(x))$
and $g(x) = (g_1(x),\ldots,g_{m_2}(x))$.
For a subset $J=\{j_1,\ldots,j_k\} \subset [m_2]$, dennote
\[
g_J(x) =(g_{j_1}(x), \ldots, g_{j_k}(x)).
\]

Let $x^*$ be a minimizer of \reff{pop:gen}.
If $J=\{j_1,\ldots,j_k\}$ is the index set of $g_j(x^*)=0$
and the KKT conditions hold at $x^*$,
then there exist $\lmd_i$ and $\mu_j (j \in J)$ such that
\begin{align*}
h(x^*) = 0, \quad g_J(x^*)=0, \quad
\nabla f(x^*) = \sum_{i \in [m_1] } \lmd_i \nabla h_i(x^*) +
\sum_{j\in J} \mu_j \nabla g_j(x^*).
\end{align*}
The above implies the Jacobian matrix of $(f,h,g_J)$ is singular at $x^*$.
For a subset $J \subset [m_2]$,
denote the determinantal variety of $(f,h,g_J)$'s Jacobian being singular by
\be \label{def:G_J}
G_J = \left\{ x \in \cpx^n: \rank\, B^J(x) \leq m_1+ |J| \right\}, \quad
\,  B^J(x) = \bbm \nabla f(x) & \nabla h(x) & \nabla g_J(x) \ebm.
\ee
Then, $x^* \, \in  \, V(h,g_J) \cap  G_J$ where  
$
V(h,g_J) := \big \{ x \in \cpx^n: h(x) = 0, g_J(x) = 0 \big \}.
$

This motivates us to use $g_J(x)=0$ and $G_J$ to get tighter SDP relaxations
for \reff{pop:gen}. To do so, a practical issue is
how to get a ``nice" description for $G_J$?
An obvious description for $G_J$ is that all its maximal minors vanish.
But there are totally $\binom{n}{m_1+k+1}$ such minors (if $m_1+k+1\leq n$),
which is huge for big $n,m_1,k$.
Can we define $G_J$ by a set of the smallest number of equations?
Furthermore, the active index set $J$ is usually unknown in advance.
Can we get an SDP relaxation that is independent of $J$?
These issues will be discussed in the sequel.

\subsection{Minimum defining equations for determinantal varieties}
\label{ssec:min-def}

Let $k\leq n$ and $X=(X_{ij})$ be a $n\times k$ matrix of indeterminants $X_{ij}$.
Define the determinantal variety
\[
D_{t-1}^{n,k} = \left\{ X \in \cpx^{n\times k}:\, \rank \, X < t \right\}.
\]
For any index set $I = \{i_1,\ldots, i_k\} \subset [n]$, denote by $\det_I(X)$
the $(i_1,\ldots,i_k)\times (1,\ldots,k)$-minor of matrix $X$, i.e.,
the determinant of the submatrix of $X$ whose row indices are
$i_1,\ldots, i_k$ and column indices are $1,\ldots,k$.
Clearly, it holds that
\[
D_{k-1}^{n,k} = \left\{ X \in \cpx^{n\times k}:\,
\mbox{det}_I(X) = 0 \, \quad \forall \, I \in [n]_k \right\}.
\]
The above has $\binom{n}{k}$ defining equations of degree $k$.
An interesting fact is that
we do not need $\binom{n}{k}$ equations to define $D_{k-1}^{n,k}$.
Actually $nk-k^2+1$ are enough.
There is very nice work on this issue.
Bruns and Vetter \cite{BrVe} showed $nk-t^2+1$
equations are enough for defining $D_{t-1}^{n,k}$.
Later, Bruns and Schw\"{a}nzl \cite{BrSch} showed
$nk-t^2+1$ is the smallest number of equations for defining $D_{t-1}^{n,k}$.
Typically, $nk-t^2+1 \ll \binom{n}{k}$ for big $n$ and $k$.
A general method for constructing $nk-t^2+1$ defining polynomial equations
for $D_{t-1}^{n,k}$ was described in Chapt.~5 of \cite{BrVe}.
Here we briefly show how it works for $D_{k-1}^{n,k}$.

Let $\Gamma(X)$ denote the set of all $k$-minors of $X$
(assume their row indices are strictly increasing).
For convenience, for any $1\leq i_1 < \cdots < i_k\leq n$,
we just denote by $[i_1,\ldots, i_k]$ the $(i_1,\ldots,i_k)\times (1,\ldots,k)$-minor of $X$.
Define a partial ordering on $\Gamma(X)$ as follows:
\[
[i_1,\ldots, i_k] < [j_1,\ldots, j_k] \quad \Longleftrightarrow \quad
i_1 \leq j_1, \ldots, i_k \leq j_k, \,  \sum_{\ell=1}^k i_\ell < \sum_{\ell=1}^k j_\ell.
\]
If $I=\{i_1,\ldots, i_k\}$,
we also write $I= [i_1,\ldots, i_k]$ as a minor in $\Gamma(X)$ for convenience.
For any $I \in \Gamma(X)$, define its rank as
\[
rk(I) = \max \left\{ \ell:\, I = I^{(\ell)} > \cdots > I^{(1)},
\quad \mbox{ every } \quad I^{(i)} \in \Gamma(X)  \right\}.
\]
The maximum minor in $\Gamma(X)$ is $[n-k+1,\ldots,n]$ and has rank $nk-k^2+1$.
For every $1\leq \ell \leq nk-k^2+1$, define
\be \label{def:eta_ell}
\eta_\ell(X) \quad = \quad \sum_{ I \in [n]_k,  rk(I) = \ell }  \mbox{det}_I(X).
\ee

\begin{lem}[Lemma~(5.9), Bruns and Vetter \cite{BrVe}]
It holds that
\[
D_{k-1}^{n,k} = \left\{ X \in \cpx^{n\times k}:\, \eta_\ell(X) = 0,\, \ell =1,\ldots, nk-k^2+1 \right\}.
\]
\end{lem}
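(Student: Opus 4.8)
I would prove the stated equality of varieties by establishing the two inclusions separately. Throughout, write $P_I := \det_I(X)$ for $I \in [n]_k$, so that $\eta_\ell = \sum_{rk(I) = \ell} P_I$. The inclusion $D_{k-1}^{n,k} \subseteq \{X : \eta_\ell(X) = 0 \ \forall \ell\}$ is immediate: if $\rank X < k$ then every maximal minor $P_I(X)$ vanishes, and each $\eta_\ell$ is a sum of such minors, hence $\eta_\ell(X) = 0$. All the content is in the reverse inclusion, namely that $\eta_\ell(p) = 0$ for all $\ell$ forces $P_I(p) = 0$ for every $I$, i.e. $\rank p < k$.

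The first thing I would record is the combinatorial backbone. The poset $\Gamma(X)$ is graded with rank function $rk(I) = \sum_\ell i_\ell - \binom{k+1}{2} + 1$, so two minors lie in the same level $\{rk = \ell\}$ exactly when their row-index sums agree, and any two \emph{distinct} minors of a common level are incomparable (a strict inequality $I < J$ forces $\sum I < \sum J$). Thus each $\eta_\ell$ is a sum over an antichain, and the extreme levels $\ell = 1$ and $\ell = nk-k^2+1$ are the single minors $P_{[1,\ldots,k]}$ and $P_{[n-k+1,\ldots,n]}$, which vanish at $p$ outright. Two invariances of $W := \{\eta_\ell = 0 \ \forall \ell\}$ streamline the bookkeeping: since $\eta_\ell(XC) = \det(C)\,\eta_\ell(X)$, the set $W$ is invariant under right multiplication by $GL_k(\cpx)$, letting me normalize a rank-$k$ point to contain a $k \times k$ identity block; and since all minors of level $\ell$ share a common row-sum $s_\ell$, the group $\diag(t,t^2,\ldots,t^n)$ scales $\eta_\ell$ by the single factor $t^{s_\ell}$ and so also preserves $W$.

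The engine of the reverse inclusion is the classical Grassmann--Plücker relations, which hold identically in the entries $X_{ij}$ and let me convert the additive data $\eta_\ell(p)=0$ into multiplicative data. The model case is $n=4$, $k=2$: the only nonsingleton level is $\{[1,4],[2,3]\}$, and once the singleton levels force $P_{12}=P_{13}=P_{24}=P_{34}=0$, the relation $P_{12}P_{34} - P_{13}P_{24} + P_{14}P_{23} = 0$ collapses to $P_{14}P_{23}=0$; combined with $P_{14}+P_{23} = \eta_3 = 0$ this yields $P_{14}=P_{23}=0$. The plan is to run this mechanism inductively over the antichains, working inward from the two extremes: assuming all minors of the previously treated levels vanish at $p$, the Plücker relations specialize to identities purely among the minors of the current level (and earlier, vanishing ones), and together with $\eta_\ell(p)=0$ these should pin each minor of the level to zero.

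The hard part is precisely the control of cancellation inside a single antichain: a level may contain many pairwise-incomparable minors whose sum $\eta_\ell(p)$ vanishes without any individual one vanishing, and only the quadratic Plücker relations can rule this out. Concretely, I would need to show that, given the vanishing of all minors in the already-treated cones, the specialized Plücker relations supply enough quadratic identities among the level-$\ell$ minors that, in tandem with the linear relation $\eta_\ell = 0$, all of their elementary symmetric functions vanish at $p$ — whence each is zero. Verifying that the relations available at each stage genuinely suffice, and ordering the levels so that every relation needed involves only already-established vanishings, is the crux of the argument; the grading and the two invariances above are the scaffolding that keeps this induction consistent.
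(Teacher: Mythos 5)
First, note that the paper offers no proof of this lemma: it is quoted verbatim from Bruns--Vetter (Lemma (5.9) of \cite{BrVe}), so your attempt can only be judged on its own terms. On those terms it has a genuine gap, and you have located it yourself: the entire content of the lemma is the claim that, once all minors of strictly smaller rank vanish at $p$, the relation $\eta_\ell(p)=0$ together with quadratic relations among maximal minors forces each individual $\det_I(p)$ with $rk(I)=\ell$ to vanish. You verify this only for $n=4$, $k=2$, and for the general case you write that the relations ``should pin each minor of the level to zero'' and that you ``would need to show'' the elementary symmetric functions of the level-$\ell$ minors vanish. That is the theorem, not a proof of it. Moreover, the mechanism you propose is doubly problematic: (i) it is not established that the Pl\"ucker relations, specialized at $p$, yield the vanishing of all elementary symmetric functions of an arbitrary antichain of minors; and (ii) even the weaker conclusion $\sum_I \det_I(p)^2=0$, which one does get from $\eta_\ell(p)^2=0$ once all cross-products vanish, proves nothing over $\cpx$, where a sum of squares can vanish without each term vanishing. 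Your two invariance observations ($GL_k$ on the right, the diagonal torus) are correct but are never used to close the gap.

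The standard way to finish (and essentially what Bruns--Vetter do) replaces ``elementary symmetric functions'' by the straightening law for maximal minors: if $I$ and $J$ are incomparable in the partial order, then $\det_I\cdot\det_J$ lies in the ideal generated by products $\det_A\det_B$ with $A\le B$ standard and $A$ strictly smaller than both $I$ and $J$; for maximal minors these are exactly the Pl\"ucker relations, iterated. Granting this, argue by contradiction: among all $I$ with $\det_I(p)\ne 0$ choose one minimal in the partial order, set $\ell=rk(I)$, and multiply $\eta_\ell$ by the \emph{single} minor $\det_I$. Every cross term $\det_I\det_J$ with $J\ne I$, $rk(J)=\ell$, reduces modulo minors strictly below $I$, all of which vanish at $p$ by minimality; hence $0=\det_I(p)\,\eta_\ell(p)=\det_I(p)^2$, a contradiction valid over any field. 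This single-multiplier step is what your elementary-symmetric-function plan cannot replace over $\cpx$, and the straightening law is the input you would need to state and prove (or cite) to make the induction go through. As written, the proposal is a correct outline of the easy inclusion plus a plan whose crux is unexecuted, so it does not constitute a proof.
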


When $k=2$, $D_1^{n,2}$ would be defined by $2n-3$ polynomials.
The biggest minor is $[n-1,n]$ and has rank $2n-3$.
For each $\ell =1,2,\ldots, 2n-3$, we clearly have
\[
\eta_\ell(X) = \sum_{1\leq i_1 < i_2 \leq n: \, i_1+i_2 = \ell+2 }  [i_1,i_2].
\]
Every $2$-minor of $X$ is a summand of some $\eta_\ell(X)$.

When $k=3$, $D_2^{n,3}$ can be defined by $3n-8$ polynomials of the form $\eta_\ell(X)$.
For instance of $n=6$, the partial ordering on $\Gamma(X)$ is shown in the following diagram.
{\small
\[
\xymatrix{
     &     & 125  \ar@//[r] \ar@/_/[dr]  & 126 \ar@//[r] \ar@/_/[dr]
     & 136 \ar@//[r] \ar@/_/[dr]  & 146 \ar@//[r] \ar@/_/[dr]
     & 156  \ar@//[r] & 256  \ar@/_/[dr]  &     &   \\
123 \ar@//[r] & 124 \ar@/^/[ur] \ar@/_/[dr] &     & 135  \ar@/^/[ur]  \ar@//[r] \ar@/_/[dr]
    & 145 \ar@/^/[ur] \ar@//[r] \ar@/_/[dr]  & 236 \ar@/^/[ur]  \ar@//[r] \ar@/_/[dr]
    & 246  \ar@/^/[ur] \ar@/_/[dr] &     & 356 \ar@//[r] & 456  \\
     &     & 134 \ar@//[r] \ar@/^/[ur]  & 234 \ar@//[r] \ar@/^/[ur]
     & 235 \ar@//[r] \ar@/^/[ur] & 245 \ar@//[r] \ar@/^/[ur]
     & 345 \ar@//[r] & 346 \ar@/^/[ur] &     &   \\
}
\]
}
In the above an arrow points to a bigger minor. Clearly, we have the expressions
\[
\eta_1(X) = [1,2,3], \quad \eta_2(X) = [1,2,4], \quad \eta_3(X) = [1,2,5]+[1,3,4],
\]
\[
\eta_4(X) = [1,2,6]+[1,3,5]+[2,3,4], \quad \eta_5(X) = [1,3,6]+[1,4,5]+[2,3,5],
\]
\[
\eta_6(X) = [1,4,6]+[2,3,6]+[2,4,5], \quad \eta_7(X) = [1,5,6]+[2,4,6]+[3,4,5],
\]
\[
\eta_8(X)=[2,5,6]+[3,4,6], \quad \eta_9(X)=[3,5,6], \quad \eta_{10}(X)=[4,5,6].
\]
Every above $\eta_i(X)$ has degree $3$.
Note that the summands $[i_1,i_2,i_3]$ from the same $\eta_i(X)$
have a constant summation $i_1+i_2+i_3$.
Thus, for each $\ell =1,\ldots, 3n-8$, we have
\[
\eta_\ell(X) = \sum_{1\leq i_1 < i_2 < i_3 \leq n: \, i_1+i_2+i_3 = \ell+ 5 }  [i_1,i_2,i_3].
\]

When $k>3$ is general, $D_{k-1}^{n,k}$ can be defined by $nk-k^2+1$ polynomials of the form $\eta_\ell(X)$.
For each $\ell =1,2,\ldots, nk-k^2+1$, we similarly have the expression
\[
\eta_\ell(X) = \sum_{1\leq i_1 < \cdots < i_k \leq n: \, i_1+\cdots+i_k = \ell+\binom{k+1}{2}-1 }
[i_1,\ldots,i_k].
\]

\subsection{The exact Jacobian SDP relaxation}

For every $J = \{j_1,\ldots,j_k\} \subset [m_2]$ with $k \leq m-m_1$,
by applying formula \reff{def:eta_ell}, let
\[
\eta_1^J,\ldots, \eta_{len(J)}^J \quad \mbox{ where }
\quad len(J)=n(m_1+k+1)-(m_1+k+1)^2+1
\]
be the set of defining polynomials for
the determinantal variety $G_J$ defined in \reff{def:G_J}
of the Jacobian of $(f,h,g_J)$ being singular.
For each $i=1,\ldots, len(J)$, define
\be  \label{df:vphi-J}
\varphi_i^J(x) = \eta_i^J(x) \cdot \prod_{j \in J^c} g_j(x),
\quad \mbox{ where } \quad J^c=[m_2]\backslash J.
\ee
Using the product $\prod_{j \in J^c} g_j(x)$ in the above
is motivated by a characterization of critical points in \cite{Hiep10}.
For simplicity, list all possible polynomials $\varphi_i^J(x)$ in \reff{df:vphi-J} sequentially as
\be \label{def:vphi:r}
\varphi_1(x), \, \varphi_2(x), \, \ldots, \, \varphi_r(x), \quad \mbox{ where } \quad
r = \sum_{J \subset [m_2], |J|\leq m-m_1}  len(J).
\ee
Now wefine the variety
\be \label{def:var-W}
W = \left\{ x \in \cpx^n: \,  h_1(x) =  \cdots = h_{m_1}(x) =
\varphi_1(x) = \cdots = \varphi_r(x)=0
\right\}.
\ee
If the minimum $f_{min}$ of \reff{pop:gen} is achieved at a KKT point,
then \reff{pop:gen} is equivalent to
\be  \label{pop:kktvar}
\baray{rl}
\underset{x\in \re^n}{\min} & f(x) \\
\mbox{s.t.} & h_1(x) = \cdots = h_{m_1}(x) = 0, \\
& \varphi_1(x) = \cdots = \varphi_r(x) = 0, \\
& g_\nu(x) \geq 0, \,\forall \nu \in \{0,1\}^{m_2}.
\earay
\ee
Here, we denote $g_\nu(x) =g_1(x)^{\nu_1} \cdots g_{m_2}(x)^{\nu_{m_2}}$.

To construct an SDP relaxation for \reff{pop:kktvar},
we need to define localizing moment matrices.
Let $q(x)$ be a polynomial with $\deg(q) \leq 2N$.
Define symmetric matrices $A_\af^{(N)}$ such that
\[
q(x) [x]_d [x]_d^T \quad = \quad
\sum_{\af \in \N^n: |\af| \leq 2N}  A_\af^{(N)} x^\af,
\quad \mbox{ where } \quad d = N-\lceil \deg(q)/2 \rceil.
\]
Then the $N$-th order localizing moment matrix of $q$ is defined as
\[
L_q^{(N)}(y) = \sum_{\af \in \N^n: |\af| \leq 2N}  A_\af^{(N)} y_\af.
\]
Here $y$ is a moment vector indexed by $\af \in \N^n$ with $|\af| \leq 2N$.
Moreover, denote
\[
L_f(y) = \sum_{\af \in \N^n: |\af| \leq \deg(f) }  f_\af y_\af
\quad \mbox{ for } \quad  f(x) = \sum_{\af \in \N^n: |\af| \leq \deg(f) }  f_\af x^\af.
\]

The $N$-th order Lasserre's relaxation for \reff{pop:kktvar} is the SDP
\be  \label{las-sdp:deg=2N}
\baray{rl}
f_N^{(1)}:= \min  &  L_f(y)  \\
\mbox{s.t.} & L_{h_i}^{(N)}(y)= 0, \, i = 1,\ldots,m_1, \\
& L_{\varphi_j}^{(N)}(y)= 0, \, j = 1,\ldots,r, \\
& L_{g_\nu}^{(N)}(y) \succeq 0, \,\forall \nu \in \{0,1\}^{m_2}, \, y_0 = 1.
\earay
\ee
Compared to Schm\"{u}dgen type Lasserre's relaxation, by \reff{def:vphi:r},
the number of new equations in \reff{las-sdp:deg=2N} is
$r = O\Big(2^{m_2}\cdot n \cdot (m_1+m_2)\Big)$.
That is, $r$ is of linear order in $nm_1$ for fixed $m_2$,
but is exponential in $m_2$.
So, when $m_2$ is small or moderately large, \reff{las-sdp:deg=2N} is practical;
but for big $m_2$, \reff{las-sdp:deg=2N} becomes more difficult to solve numerically.
Now we present the dual of \reff{las-sdp:deg=2N}.
Define the truncated preordering $P^{(N)}$ generated by $g_j$ as
\be \label{def:P^(N)}
P^{(N)} = \left\{ \sum_{\nu \in \{0,1\}^{m_2}} \sig_\nu(x) g_\nu(x):
\deg(\sig_\nu g_\nu) \leq 2N \right\},
\ee
and the truncated ideal $I^{(N)}$ generated by $h_i$ and $\varphi_j$ as
\be \label{def:I^(N)}
I^{(N)} = \left\{
\sum_{i=1}^{m_1} p_i(x) h_i(x) + \sum_{j=1}^{r} q_j(x) \varphi_j(x):
\baray{c}
\deg(p_ih_i) \leq 2N \quad \forall\, i\\
deg(q_j\varphi_j) \leq 2N \quad \forall\,j
\earay
\right\}.
\ee
Then, as shown in Lasserre \cite{Las01},
the dual of \reff{las-sdp:deg=2N} is the following SOS relaxation for \reff{pop:kktvar}:
\be \label{sos:deg=2N}
\baray{rl}
f_N^{(2)}:= \max  & \gamma   \\
\mbox{s.t.} & f(x) - \gamma \in I^{(N)}+ P^{(N)}.
\earay
\ee
Note the relaxation \reff{sos:deg=2N} is stronger than \reff{sos:Smg}.
Let $f^*$ be the optimal value of \reff{pop:kktvar}.
Then, by weak duality, we have the relation
\be  \label{ineq:w-dual}
 f_N^{(2)} \, \leq \, f_N^{(1)} \, \leq  \, f^*.
\ee

We are going to show that when $N$ is big enough,
\reff{las-sdp:deg=2N} is an exact SDP relaxation for \reff{pop:kktvar},
i.e., $f_N^{(2)} = f_N^{(1)} = f^*$.
For this purpose, we need the following assumption.

\begin{ass}  \label{as:fin+smth}
(i) $m_1\leq n$.
(ii) For any $u\in S$, at most $n-m_1$ of $g_1(u), \ldots, g_{m_2}(u)$ vanish.
(iii) For every $J=\{j_1,\ldots,j_k\} \subset [m_2]$ with $k\leq n-m_1$,
the variety $V(h,g_J) = \left\{ x\in \cpx^n: h(x)=0,\, g_J(x) =0 \right\}$
is nonsingular (its Jacobian has full rank on $V(h,g_J)$).
\end{ass}

\begin{theorem} \label{thm:sos-exact}
Suppose Assumption~\ref{as:fin+smth} holds.
Let $f^*$ be the minimum of \reff{pop:kktvar}.
Then there exists an integer $N^*>0$ such that
$f_{N}^{(1)} = f_{N}^{(2)} = f^*$ for all $N\geq N^*$.
Furthermore, if the minimum $f_{min}$ of \reff{pop:gen} is achievable, then
$f_{N}^{(1)} = f_{N}^{(2)} = f_{min}$ for $N\geq N^*$.
\end{theorem}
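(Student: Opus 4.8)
The plan is to establish the single algebraic statement $f - f^* \in I^{(N)} + P^{(N)}$ for all $N$ larger than some $N^*$; together with weak duality \reff{ineq:w-dual} this forces $f_N^{(2)} \ge f^*$, hence $f_N^{(2)} = f_N^{(1)} = f^*$, which is the first assertion. Before constructing the certificate I would fix the geometry of the real feasible set $T = \{ x \in \re^n : h(x)=0,\ \varphi_j(x)=0\ (\forall j),\ g_\nu(x) \ge 0\ (\forall \nu) \}$ of \reff{pop:kktvar}. Taking $\nu = e_j$ gives $g_j \ge 0$ on $T$, so $T \subseteq S$. Conversely, let $u \in T$ with active set $J = \{ j : g_j(u) = 0 \}$, so $|J| \le n - m_1$ by Assumption~\ref{as:fin+smth}(ii). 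If $|J| \le m - m_1$, the polynomials $\varphi_i^J$ occur in \reff{def:vphi:r} and, since $\prod_{j \in J^c} g_j(u) > 0$, the relations $\varphi_i^J(u) = 0$ force $\eta_i^J(u) = 0$, i.e. $u \in G_J$; if instead $|J| = n - m_1$ then $B^J(u)$ has at most $n$ rows and $G_J = \cpx^n$ trivially. In both cases $B^J(u)$ has rank $\le m_1 + |J|$, and since Assumption~\ref{as:fin+smth}(iii) makes $\nabla h(u)$ and $\nabla g_J(u)$ linearly independent, this is exactly the stationarity condition that $\nabla f(u)$ lies in the span of $\nabla h_i(u)$ and $\nabla g_j(u)$, $j \in J$. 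Hence every KKT point of \reff{pop:gen} lies in $T$, and the final assertion follows: if $f_{min}$ is attained at a KKT point it lies in $T$, giving $f^* \le f_{min}$, while $T \subseteq S$ gives $f^* \ge f_{min}$.

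The second step is to show that $f$ attains only finitely many values on $T$. Fix $J \subseteq [m_2]$ with $|J| \le n - m_1$; by Assumption~\ref{as:fin+smth}(iii) the variety $V(h,g_J)$ is smooth, and the points of $T$ whose active set is exactly $J$ are critical points of the restriction $f|_{V(h,g_J)}$. At such a point $\nabla f$ is normal to $V(h,g_J)$, so along any smooth path lying in the critical locus (whose velocity is tangent to $V(h,g_J)$) the derivative of $f$ vanishes; thus $f$ is constant on each connected component of the critical locus. Since a real algebraic set has finitely many connected components and there are finitely many index sets $J$, the value set $f(T)$ is finite. Write it as $f^* = c_1 < c_2 < \cdots < c_L$ with $T = T_1 \cup \cdots \cup T_L$, where $f \equiv c_\ell$ on $T_\ell$; in particular $f - f^*$ vanishes on $T_1$ and is bounded below by $c_2 - f^* > 0$ on $T \setminus T_1$.

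The core of the proof is a bounded-degree Positivstellensatz, for which I would adapt the gradient-ideal technique of \cite{NDS}. Using the Lagrange interpolants $e_\ell(f) = \prod_{\ell' \ne \ell} (f - c_{\ell'})/(c_\ell - c_{\ell'})$, which restrict to the indicator of $T_\ell$ on $T$, one obtains the congruence $f - f^* \equiv \sum_\ell (c_\ell - f^*)\, e_\ell(f)$ modulo the vanishing ideal of $T$, with each coefficient $c_\ell - f^* \ge 0$. Converting this into an honest membership in $I^{(N)} + P^{(N)}$ is where the real-radical and preordering structure enters: on each stratum $T_\ell$ the appropriate products $g_\nu$ (those supported on the active set) must be invoked to certify nonnegativity of the localized terms, while the smoothness from Assumption~\ref{as:fin+smth}(iii), propagated through the determinantal construction of the $\varphi_j$, is what guarantees that the relevant ideal is real radical along $T$, so that each local sum-of-squares-modulo-ideal certificate exists. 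Patching the finitely many local certificates then yields a single identity $f - f^* = \sigma + \theta$ with $\sigma \in P^{(N)}$ and $\theta \in I^{(N)}$ for one finite $N^*$, valid for every $N \ge N^*$.

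I expect the decisive obstacle to be precisely this last conversion, namely obtaining a uniform finite degree bound without assuming a priori that the ideal generated by $h_i$ and $\varphi_j$ is radical. The delicate points are to prove that the minor construction of the $\varphi_i^J$ together with the nonsingularity hypothesis actually supplies the real-radical property that the argument of \cite{NDS} obtained only under an explicit radicality assumption, and to verify that the localizing factors $\prod_{j \in J^c} g_j$ separate the strata indexed by different active sets $J$ correctly — vanishing on the wrong strata without introducing spurious real zeros — so that the local certificates glue into one global representation of bounded degree.
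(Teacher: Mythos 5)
Your reduction of the theorem to a bounded-degree certificate, and your preliminary analysis of the feasible set of \reff{pop:kktvar}, are in the right spirit, but the target you set yourself --- the single membership $f - f^* \in I^{(N)} + P^{(N)}$ --- is in general \emph{false}, and the paper says so explicitly: Example~\ref{em-unc:0fail} is an unconstrained problem where $f - f_{min}$ does not lie in $I^{(N)} + P^{(N)}$ for any $N$. What the paper proves instead (Theorem~\ref{pro:unf-deg}) is the $\eps$-uniform statement: there is one $N^*$ such that $f - f^* + \eps \in I^{(N^*)} + P^{(N^*)}$ for \emph{every} $\eps > 0$. That weaker statement already yields $f_{N^*}^{(2)} \geq f^* - \eps$ for all $\eps$, hence $f_{N^*}^{(2)} = f^*$, because $f_N^{(2)}$ is a supremum that need not be attained. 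Your plan conflates exactness of the SDP value with attainment of the optimal $\gamma$ in \reff{sos:deg=2N}.

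This is not a cosmetic issue: it is exactly what lets the paper dispense with the radicality hypotheses of \cite{NDS,DNP}. You correctly identify the ``decisive obstacle'' as converting a congruence modulo the vanishing ideal of the real feasible set into membership in the ideal generated by the $h_i,\varphi_j$, and you propose to resolve it by showing that the Jacobian construction forces this ideal to be real radical along that set. It does not, and no such property is needed. The paper works with a primary decomposition $I_W = E_0 \cap \cdots \cap E_r$ of the (possibly non-radical) ideal, uses only the ordinary Nullstellensatz ($p^{k_i} \in E_i$ when $p$ vanishes on $V(E_i)$), and extracts an approximate square root of $f + \eps$ on each component by truncating the binomial series for $\big(1 + (v_i^{-1}f - 1)\big)^{1/2}$ at order $k_i$, the truncation error landing in $E_i$. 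On the component where $f$ equals $f^*$ the coefficients of this series blow up as $\eps \to 0$, but the \emph{degrees} do not --- that is the whole point of Theorem~\ref{pro:unf-deg}. The pieces are then glued with a partition of unity by squares (Lemma~\ref{lm:unit-dcmp}) rather than your Lagrange interpolants $e_\ell(f)$, which are not sums of squares and so cannot be absorbed into the preordering. Note also that the decomposition must be carried out over $\cpx$ on the variety $W$ of Lemma~\ref{lm:W=Kx} --- including the components whose real points miss the feasible region, which are handled by Stengle's Positivstellensatz --- not merely over the connected components of the real critical locus, since the certificate lives in the ideal $I_W$ and not in the real vanishing ideal.
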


Theorem \ref{thm:sos-exact}
will be proved in Section~\ref{sec:proof}.
When the feasible set $S$ of \reff{pop:gen} is compact,
the minimum $f_{min}$ is always achievable.
Thus, Theorem~\ref{thm:sos-exact} implies the following.

\begin{cor}
Suppose Assumption~\ref{as:fin+smth} holds. If $S$ is compact,
then $f_{N}^{(1)} = f_{N}^{(2)} = f_{min}$ for $N$ big enough.
\end{cor}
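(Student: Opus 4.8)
The plan is to deduce this corollary directly from the second assertion of Theorem~\ref{thm:sos-exact}, whose only hypothesis beyond Assumption~\ref{as:fin+smth} is that the minimum $f_{min}$ of \reff{pop:gen} be achievable. Thus the entire content to be verified is that compactness of $S$ forces $f_{min}$ to be attained; everything else---the equivalence of \reff{pop:gen} with \reff{pop:kktvar}, the constraint qualification that makes a minimizer a KKT point, and the exactness $f_N^{(1)} = f_N^{(2)} = f^*$---is already supplied by the theorem and need not be reproved.

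First I would observe that $f$ is a polynomial, hence continuous on $\re^n$, and that $S$ is by hypothesis a compact subset of $\re^n$. Taking $S$ to be nonempty (the only case in which $f_{min}$ is finite and the relaxation is of interest), the Weierstrass extreme value theorem guarantees that the continuous function $f$ attains its infimum on the compact set $S$; that is, there exists $x^* \in S$ with $f(x^*) = f_{min}$. Hence $f_{min}$ is achievable in the precise sense required by Theorem~\ref{thm:sos-exact}.

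With achievability established, I would simply invoke the second part of Theorem~\ref{thm:sos-exact}: under Assumption~\ref{as:fin+smth} there is an integer $N^* > 0$ such that $f_N^{(1)} = f_N^{(2)} = f_{min}$ for every $N \geq N^*$, which is exactly the claim of the corollary.

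There is no substantive obstacle here: the corollary is a specialization of the theorem to the setting in which achievability is automatic, so the only genuine step is the elementary compactness argument of the previous paragraph. The one point that deserves explicit care is the tacit assumption $S \neq \emptyset$; if one permits $S = \emptyset$, then $f_{min}$ is conventionally $+\infty$ and the statement is read vacuously. Beyond this, no new estimate, determinantal-variety computation, or Positivstellensatz argument is needed, since all the analytic and algebraic work has already been carried out in the proof of Theorem~\ref{thm:sos-exact}.
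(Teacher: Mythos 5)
Your proposal is correct and matches the paper's own argument exactly: the paper also deduces the corollary by noting that compactness of $S$ makes $f_{min}$ achievable (Weierstrass) and then invoking Theorem~\ref{thm:sos-exact}. Your extra remark about the vacuous case $S=\emptyset$ is a harmless refinement the paper leaves implicit.
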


A practical issue in applications is how to identify
whether \reff{las-sdp:deg=2N} is exact for a given $N$.
This would be possible by applying the flat-extension condition (FEC) \cite{CuFi00}.
Let $y^*$ be a minimizer of \reff{las-sdp:deg=2N}.
We say $y^*$ satisfies FEC if
$\rank\,L_{g_\nu}^{(N)}(y^*) = \rank\,L_{g_\nu}^{(N-1)}(y^*)$ for every $\nu$.
When FEC holds, \reff{las-sdp:deg=2N} is exact for \reff{pop:gen},
and a finite set of global minimizers would be extracted from $y^*$.
We refer to \cite{HenLas05} for a numerical method on how to do this.
A very nice software for solving SDP relaxations from polynomial optimization
is {\it GloptiPoly~3} \cite{GloPol3}
which also provides routines for finding minimizers if FEC holds.

\bigskip
Now we discuss how general the conditions of Theorem~\ref{thm:sos-exact} are.
Define
\[
B_d(S) = \left\{ f\in \re[x]_{\leq d}:\, \inf_{u\in S} \, f(u) > -\infty \right\}.
\]
Clearly, $B_d(S)$ is convex and has nonempty interior.
Define the projectivization of $S$ as
\be \label{def:Shom}
S^{prj} = \left\{ \tilde{x} \in \re^{n+1}:\,
\tilde{h}_1(\tilde{x})=\cdots =\tilde{h}_{m_1}(\tilde{x})=0, \quad
\tilde{g}_1(\tilde{x})\geq 0, \ldots, \tilde{g}_{m_2}(\tilde{x})\geq 0
\right\}.
\ee
Here $\tilde{p}$ denotes the homogenization of $p$,
and $\tilde{x}=(x_0,x_1,\ldots,x_n)$, i.e.,
$
\tilde{p}(\tilde{x}) \, = \, x_0^{\deg(p)}p(x/x_0).
$
We say $S$ is closed at $\infty$ if
\[
S^{prj} \cap \{x_0\geq 0\} \, = \,
\mbox{closure}\left( S^{prj} \cap \{x_0 > 0\} \right).
\]
Under some generic conditions,
Assumption~\ref{as:fin+smth} is true
and the minimum $f_{min}$ of \reff{pop:gen} is achievable.
These conditions are expressed as non-vanishing
of the so-called {\it resultants $Res$} or {\it discriminants $\Delta$},
which are polynomial in the coefficients of $f,h_i,g_j$.
We refer to Appendix for a short introduction about $Res$ and $\Delta$.

\begin{theorem} \label{thm:con-gen}
Let $f,h_i,g_j$ be the polynomials in \reff{pop:gen},
and $S$ be the feasible set.
\bit
\item [(a)] If $m_1 > n$ and
$Res(h_{i_1}, \ldots, h_{i_{n+1}}) \ne 0$
for some $\{i_1,\ldots,i_{n+1} \}$, then $S=\emptyset$.

\item [(b)] If $m_1 \leq n$ and for every $\{j_1,\ldots,j_{n-m_1+1} \} \subset [m_2]$
\[
Res(h_1,\ldots, h_{m_1}, g_{j_1}, \ldots, g_{j_{n-m_1+1}}) \, \ne \, 0,
\]
then item (ii) of Assumption~\ref{as:fin+smth} holds.

\item [(c)] If $m_1 \leq n$ and for every $\{j_1,\ldots,j_k\} \subset [m_2]$ with $k\leq n-m_1$
\[
\Delta(h_1,\ldots, h_{m_1}, g_{j_1}, \ldots, g_{j_k}) \, \ne \, 0,
\]
then item (iii) of Assumption~\ref{as:fin+smth} holds.

\item [(d)] Suppose $S$ is closed at $\infty$ and $f\in B_d(S)$.
If the resultant of any $n$ of  $h_i^{hom}$, $g_j^{hom}$ is nonzero
(only when $m_1+m_2\geq n$),
and for every $\{j_1,\ldots,j_k\}$ with $k\leq n-m_1-1$
\[
\Delta(f^{hom}, h_1^{hom},\ldots, h_{m_1}^{hom},
g_{j_1}^{hom}, \ldots, g_{j_k}^{hom} ) \, \ne \, 0,
\]
then there exists $v\in S$ such that $f_{min} = f(v)$.
Here $p^{hom}$ denotes $p$'s homogeneous part of the highest degree.

\item [(e)] If $f\in B_d(\re^n)$ and $\Delta(f^{hom}) \ne 0$,
then the minimum of $f(x)$ in $\re^n$ is achievable.

\eit
\end{theorem}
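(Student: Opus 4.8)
The plan is to treat each item as a direct application of two standard facts from elimination theory, recalled in the Appendix: for $n+1$ forms in the $n+1$ homogeneous variables $\tilde{x}=(x_0,\ldots,x_n)$, the resultant $Res$ vanishes if and only if the forms share a common zero in $\P^n$; and the discriminant $\Delta$ of a system vanishes if and only if that system has a singular common solution, i.e.\ a common zero at which the Jacobian drops rank. Each of (a)--(e) is then obtained by exhibiting or excluding such a common (singular) zero.

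For item (a) I would argue by contradiction. If $S\neq\emptyset$, pick $u\in S$; then $h_{i_1}(u)=\cdots=h_{i_{n+1}}(u)=0$, so the homogenizations $\tilde{h}_{i_\ell}$ all vanish at the projective point $(1:u_1:\cdots:u_n)\in\P^n$. This point is nontrivial (its $x_0$-coordinate equals $1$), so $Res(h_{i_1},\ldots,h_{i_{n+1}})$ must vanish, contradicting the hypothesis; hence $S=\emptyset$. Item (b) is the same argument applied to the $n+1$ polynomials $h_1,\ldots,h_{m_1},g_{j_1},\ldots,g_{j_{n-m_1+1}}$: if $n-m_1+1$ of the $g_j$ vanished at some $u\in S$, these $n+1$ polynomials would share the common affine zero $u$, forcing the corresponding resultant to be zero. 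For item (c) I would invoke the discriminant characterization directly: $V(h,g_J)$ fails to be nonsingular exactly when the Jacobian of $(h,g_J)$ is rank-deficient at some common zero, which is precisely what $\Delta(h_1,\ldots,h_{m_1},g_{j_1},\ldots,g_{j_k})=0$ detects, so non-vanishing of every such discriminant yields item (iii) of Assumption~\ref{as:fin+smth}. The only care needed in (a)--(c) is to match the affine polynomials to their homogenizations and to check that the Appendix's conventions for $Res$ and $\Delta$ of a possibly non-square system agree with the common-zero / singular-zero descriptions used here.

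The substance lies in the achievability statements (d) and (e), which I would prove by a compactness argument in the projective closure. Given $f\in B_d(S)$, take a minimizing sequence $x^{(k)}\in S$; if it is bounded, a convergent subsequence produces a minimizer. Suppose instead $\|x^{(k)}\|_2\to\infty$ and let the normalized representatives of the homogenized points $\tilde{x}^{(k)}$ converge, after passing to a subsequence, to a boundary point $\tilde{x}^*\in S^{prj}\cap\{x_0=0\}$. The ``closed at $\infty$'' hypothesis guarantees $\tilde{x}^*$ lies in the closure of the genuinely positive part, so it is an admissible limiting direction. I would then expand $f$ along the sequence, whose leading behavior is governed by $f^{hom}$; the resultant conditions on the homogeneous constraints ensure the forms $h_i^{hom},g_j^{hom}$ acquire no extra common zeros at infinity, so the active set at $\tilde{x}^*$ is controlled. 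The discriminant condition $\Delta(f^{hom},h_1^{hom},\ldots,g_{j_k}^{hom})\neq0$ then says that on the variety at infinity the gradient of $f^{hom}$ restricted to the limiting constraint variety does not vanish, which forces the leading term of $f$ to be strictly increasing along the escaping sequence, contradicting that the sequence is minimizing. Hence no minimizing sequence escapes to infinity and $f_{min}$ is attained. Item (e) is the special case $S=\re^n$, where the argument collapses to the statement that $\Delta(f^{hom})\neq0$ makes the origin the only zero of $\nabla f^{hom}$, so $f^{hom}$ has no nontrivial real critical direction and a minimizing sequence cannot run off to infinity with $f$ staying bounded.

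The hard part, where I expect the real work to lie, is item (d): making precise that the non-vanishing discriminant of the top-degree forms prevents a minimizing sequence from escaping to a boundary point at infinity. This requires a careful asymptotic expansion of $f$ and of the active constraints along curves, not merely rays, tending to $\tilde{x}^*$, a verification that the limiting object is exactly the projective variety cut out by the homogeneous parts, and an argument that smoothness there, guaranteed by $\Delta\neq0$, together with the ``closed at $\infty$'' condition, forces a strictly increasing leading term. Managing the interplay between the resultant conditions, which control common zeros of the constraints at infinity, and the discriminant condition, which controls tangency of the level sets of $f$ to the constraint variety at the boundary, is the technical crux; once that is set up, items (a)--(c) and (e) follow as comparatively direct specializations.
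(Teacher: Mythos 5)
Your treatment of items (a), (b), (c) coincides with the paper's: each is a one-line translation of the resultant/discriminant characterizations of common and singular zeros, and your arguments there are fine. Item (e) is also essentially right, but the step you elide is the load-bearing one: you must observe that a limit direction $u^*$ of an escaping minimizing sequence satisfies $f^{hom}(u^*)=0$ while $f^{hom}\geq 0$ everywhere (since $f\in B_d(\re^n)$), so that $u^*$ is a \emph{global minimizer} of $f^{hom}$ and hence a nonzero critical point, contradicting $\Delta(f^{hom})\neq 0$. ``No nontrivial real critical direction, therefore the sequence cannot escape'' is the conclusion, not the argument.

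The genuine gap is in (d). The mechanism you propose --- that $\Delta\neq 0$ makes ``the leading term of $f$ strictly increasing along the escaping sequence'' --- is not a valid inference and is not how the discriminant enters. A nonvanishing (restricted) gradient at a limit point says nothing about monotonicity of $f$ along a particular sequence; the correct contradiction is with a \emph{first-order necessary optimality condition} at the limit direction. The paper's route: since $S$ is closed at $\infty$ and $f-f_{min}\in\pt P_d(S)$, Prop.~6.1 of \cite{NieDis} reduces the question to minimizing $\tilde f_0$ over the compact slice $S^{prj}\cap\{\|\tilde x\|_2=1,\,x_0\geq 0\}$; if a minimizer $\tilde u$ has $u_0=0$, then $u$ is a constrained minimizer of $f^{hom}$ over $\{h^{hom}=0,\ g^{hom}\geq 0\}$ with value $0$, and the \emph{Fritz-John conditions} produce a nonzero point where $f^{hom}$, the $h_i^{hom}$ and the active $g_{j_\ell}^{hom}$ all vanish and their Jacobian is rank-deficient --- i.e.\ exactly a witness that $\Delta(f^{hom},h^{hom},g_J^{hom})=0$. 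Your sketch never applies Fritz-John (or any optimality condition) at infinity, never verifies that the limit direction is a minimizer of the homogeneous problem (this is precisely where closedness at $\infty$ is used, to propagate $\tilde f_0\geq 0$ to the points with $x_0=0$), and never arranges for $f^{hom}$ itself to vanish at the candidate singular point, which the definition of $\Delta$ requires. You also misread the role of the resultant hypothesis: it is not there to ``control the active set'' but to force $m_1+k\leq n-1$, so that the discriminant of the $m_1+k+1$ forms is defined at all (the Appendix only defines $\Delta$ for at most $n-1$ forms). Since you explicitly defer all of this as the ``technical crux,'' item (d) --- the substance of the theorem --- remains unproved in your proposal.
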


Theorem~\ref{thm:con-gen} will be proved in Section~\ref{sec:proof}.
Now we consider the special case of \reff{pop:gen}
having no constraints. If $f_{min} > -\infty$ is achievable,
then \reff{pop:gen} is equivalent to \reff{po:grad=0}.
The item (e) of Theorem~\ref{thm:con-gen} tells us that
this is generically true.
The gradient SOS relaxation for \reff{po:grad=0} described in \cite{NDS}
is the same as \reff{sos:deg=2N} for the unconstrained case.
The following is an immediate consequence of Theorem~\ref{thm:sos-exact}
and item (e) of Theorem~\ref{thm:con-gen}.

\begin{cor} \label{cor:gsos}
If $S=\re^n$, $f(x)$ has minimum $f_{min}>-\infty$, and $\Delta(f^{hom}) \ne 0$,
then the optimal values of \reff{las-sdp:deg=2N} and \reff{sos:deg=2N}
are equal to $f_{min}$ if $N$ is big enough.
\end{cor}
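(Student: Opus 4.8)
The plan is to derive this as a direct specialization of Theorem~\ref{thm:sos-exact} together with item~(e) of Theorem~\ref{thm:con-gen} to the unconstrained setting $m_1 = m_2 = 0$. First I would record what the construction of Section~\ref{sec:constr} collapses to when there are no constraints. Here $m = \min(m_1+m_2, n-1) = 0$, the only admissible index set is $J = \emptyset$, and the Jacobian matrix $B^\emptyset(x)$ of \reff{def:G_J} is the single column $\nabla f(x)$. Its rank drops below one exactly when $\nabla f(x) = 0$, so $G_\emptyset = \{x \in \cpx^n : \nabla f(x) = 0\}$. The minimum defining polynomials $\eta_\ell$ for $D_0^{n,1}$ from Subsection~\ref{ssec:min-def} are then just the $n$ entries $\pt f/\pt x_1, \ldots, \pt f/\pt x_n$, and since $J^c = \emptyset$ the product in \reff{df:vphi-J} is the empty product $1$. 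Thus $\{\varphi_1,\ldots,\varphi_r\}$ is precisely $\{\pt f/\pt x_1,\ldots,\pt f/\pt x_n\}$, the variety $W$ in \reff{def:var-W} is the gradient variety, and \reff{pop:kktvar} reduces to \reff{po:grad=0}, as already noted in the text.

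Next I would check that Assumption~\ref{as:fin+smth} holds vacuously. Part~(i) is $m_1 = 0 \le n$. Part~(ii) asks that at most $n - m_1 = n$ of the $g_j$ vanish at any $u \in S$; since $m_2 = 0$ there are no $g_j$, so this is vacuous. Part~(iii) concerns $V(h,g_J)$ only for $J = \emptyset$, and $V(h,g_\emptyset) = \cpx^n$ is smooth, so it holds. Hence Theorem~\ref{thm:sos-exact} will apply once I establish that $f^* = f_{min}$ and that $f_{min}$ is attained at a KKT point.

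For attainment I would invoke item~(e) of Theorem~\ref{thm:con-gen}: the hypothesis $f_{min} > -\infty$ means exactly that $f \in B_d(\re^n)$ with $d = \deg f$, and combined with $\Delta(f^{hom}) \neq 0$ this produces a point $v \in \re^n$ with $f(v) = f_{min}$. Because the problem is unconstrained, $v$ is automatically a KKT point: the KKT system reduces to $\nabla f(v) = 0$, which holds at any minimizer of a differentiable function, so \reff{pop:gen} is equivalent to \reff{pop:kktvar} and $f^* = f_{min}$. Feeding this into Theorem~\ref{thm:sos-exact} with Assumption~\ref{as:fin+smth} verified then yields an integer $N^*$ with $f_N^{(1)} = f_N^{(2)} = f^* = f_{min}$ for all $N \ge N^*$, which is the assertion.

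The argument has no genuinely hard analytic step, since both ingredients are already in place; the point I would be most careful about is the precise role of $\Delta(f^{hom}) \neq 0$. It is not used to certify SDP exactness directly but to guarantee that the infimum $f_{min}$ is \emph{attained}, which is exactly what makes the unconstrained problem equivalent to the gradient-constrained problem \reff{po:grad=0} and lets Theorem~\ref{thm:sos-exact} be invoked with $f^* = f_{min}$. Without attainment the gradient relaxation could fail to recover the infimum, so the subtle step is confirming that item~(e) supplies an honest minimizer rather than merely a finite infimum.
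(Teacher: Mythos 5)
Your proposal is correct and follows exactly the route the paper intends: the paper presents this corollary as an immediate consequence of Theorem~\ref{thm:sos-exact} and item (e) of Theorem~\ref{thm:con-gen}, and your write-up simply fills in the routine verifications (Assumption~\ref{as:fin+smth} holds vacuously, $\Delta(f^{hom})\ne 0$ yields attainment, any unconstrained minimizer satisfies $\nabla f=0$ so $f^*=f_{min}$). No discrepancy with the paper's argument.
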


Corollary~\ref{cor:gsos} is stronger than Theorem~10 of \cite{NDS},
where the exactness of gradient SOS relaxation for a finite order $N$
is only shown when the gradient ideal is radical.

\section{Proof of exactness and genericity}  \label{sec:proof}
\setcounter{equation}{0}

This section proves Theorems \ref{thm:sos-exact} and \ref{thm:con-gen}.
First, we give some lemmas that are crucially used in the proof.

\begin{lem} \label{lm:W=Kx}
Let $K$ be the variety defined by the KKT conditions
\be \label{def:kkt-K}
K =
\left\{ (x,\lmd,\mu) \in \cpx^{n +m_1 + m_2}:\,
\baray{c}
\nabla f(x) = \overset{m_1}{ \underset{i=1}{\sum} } \lmd_i \nabla h_i(x)
+ \overset{m_2}{ \underset{j=1}{\sum} }   \mu_j \nabla g_j(x) \\
h_i(x) = \mu_j g_j(x) = 0, \, \forall \, (i,j) \in [m_1] \times [m_2]
\earay
\right\}.
\ee
If Assumption~\ref{as:fin+smth} holds, then $W=K_x$ where
\[
K_x=\{x \in \cpx^n:\, (x,\lmd,\mu) \in K \mbox{ for some } \lmd, \mu\}.
\]
\end{lem}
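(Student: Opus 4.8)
The plan is to prove the two inclusions $K_x \subseteq W$ and $W \subseteq K_x$ separately, reducing both to a rank condition on the Jacobian matrix $B^J(x)$ read off from the active constraint set. For any $x$ with $h(x)=0$, write $J^\ast = \{\, j \in [m_2] : g_j(x) = 0 \,\}$ for its (complex) active set. The backbone of the whole argument is the elementary observation that, by complementarity, such an $x$ lies in $K_x$ if and only if $\nabla f(x)$ belongs to the column span of $\bbm \nabla h(x) & \nabla g_{J^\ast}(x) \ebm$: the equation $\mu_j g_j(x)=0$ in \reff{def:kkt-K} forces $\mu_j=0$ whenever $g_j(x)\neq 0$, so the gradient relation only ever uses columns indexed by $J^\ast$, the remaining $\mu_j$ being freely set to $0$. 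I would record this equivalence first and invoke it in both directions.

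For the inclusion $K_x \subseteq W$, take $x\in K_x$ with active set $J^\ast$ and fix any $J$ appearing in the construction, i.e.\ with $|J|\le m-m_1$. If $J^\ast \not\subseteq J$, some $j_0\in J^\ast\cap J^c$ has $g_{j_0}(x)=0$, so the factor $\prod_{j\in J^c} g_j(x)$ vanishes and $\varphi^J_i(x)=0$ for all $i$. If instead $J^\ast\subseteq J$, the backbone observation gives $\nabla f(x)\in \mbox{colspan}\,\bbm \nabla h(x) & \nabla g_{J}(x)\ebm$, so $\rank B^J(x)\le m_1+|J|$, i.e.\ $x\in G_J$, whence $\eta^J_i(x)=0$ and again $\varphi^J_i(x)=0$. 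Together with $h(x)=0$ this gives $x\in W$. This direction is purely formal and uses only the definitions, no part of the Assumption.

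The reverse inclusion $W\subseteq K_x$ is where Assumption~\ref{as:fin+smth} enters, and I expect it to be the main obstacle. Take $x\in W$ with active set $J^\ast$ and split on $|J^\ast|$. When $|J^\ast|\le m-m_1$, the set $J^\ast$ itself indexes polynomials of the construction; since $g_j(x)\neq 0$ for $j\in (J^\ast)^c$ the product factor is nonzero, so $\varphi^{J^\ast}_i(x)=0$ forces $\eta^{J^\ast}_i(x)=0$, i.e.\ $\rank B^{J^\ast}(x)\le m_1+|J^\ast|$. Here Assumption~\ref{as:fin+smth}(iii), applicable because $|J^\ast|\le m-m_1\le n-m_1$, guarantees that $\bbm \nabla h(x) & \nabla g_{J^\ast}(x)\ebm$ has full column rank $m_1+|J^\ast|$ at the point $x\in V(h,g_{J^\ast})$; combined with the rank bound this forces $\nabla f(x)$ into its column span, and the backbone observation yields $x\in K_x$.

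The remaining case $|J^\ast|>m-m_1$ is the delicate one, and getting the index bookkeeping right is the crux. Since $m-m_1=\min(m_2,\,n-1-m_1)$ and $J^\ast\subseteq[m_2]$, this case is vacuous unless $m_2>n-1-m_1$, in which event it forces $|J^\ast|\ge n-m_1$ --- exactly the threshold at which the determinantal condition $\rank B^J\le n$ becomes vacuous (an $n$-row matrix always satisfies it, which is why the construction stops at $m-m_1$ and includes no $\varphi^J$ for $|J|=n-m_1$). I would then pick any $J'\subseteq J^\ast$ with $|J'|=n-m_1$ and apply Assumption~\ref{as:fin+smth}(iii) to $V(h,g_{J'})$: the Jacobian $\bbm \nabla h(x) & \nabla g_{J'}(x)\ebm$ has full column rank $m_1+(n-m_1)=n$, hence already spans $\cpx^n$, so $\nabla f(x)$ lies in its span automatically and $x\in K_x$ with multipliers supported on $J'\subseteq J^\ast$. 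The subtlety to handle carefully is precisely this interplay between the construction's cutoff $m-m_1=n-1-m_1$ and the nonsingularity range $k\le n-m_1$ of (iii), together with keeping the entire argument over $\cpx$ so that ``active'' means complex vanishing and the multipliers $\lmd,\mu$ are allowed to be complex.
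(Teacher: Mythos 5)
Your proof is correct and follows essentially the same route as the paper: both directions reduce to whether the active set is contained in $J$ (killing $\varphi_i^J$ via the product factor $\prod_{j\in J^c}g_j$ when it is not, and via the rank/minor condition on $B^J$ when it is), with Assumption~\ref{as:fin+smth}(iii) supplying full column rank of $\bbm \nabla h & \nabla g_J\ebm$ to extract the multipliers. Your case split at $|J^\ast|\le m-m_1$ versus $|J^\ast|>m-m_1$ matches the paper's split at $m_1+k<n$ versus $m_1+k=n$, and your use of a subset $J'$ of size $n-m_1$ in the latter case even sidesteps the paper's implicit appeal to Assumption~\ref{as:fin+smth}(ii) at complex points of $W$.
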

\begin{proof}
First, we prove $W\subset K_x$.  Choose an arbitrary $u\in W$.
Let $J=\{ j\in [m_2]: \, g_j(u) = 0\}$ and $k=|J|$.
By Assumption~\ref{as:fin+smth}, $m_1+k\leq n$.
Recall from \reff{def:G_J} that 
\[
B^J(x) \,= \, \bbm  \nabla f(x) & \nabla h(x) & \nabla g_J(x) \ebm.
\]

\noindent {\bf Case $m_1+k = n$} \,
By Assumption~\ref{as:fin+smth}, the matrix
$H(u) = \bbm \nabla h(u) & \nabla g_J(u) \ebm $ is nonsingular.
Note that $H(u)$ is now a square matrix. So, $H(u)$
is invertible, and there exist $\lmd_i$ and $\mu_j (j \in J)$ such that
\be \label{Df=Dh+Dg}
\nabla f(u) =  \sum_{i=1}^{m_1} \lmd_i \nabla h_i(u) + \sum_{j\in J} \mu_j \nabla g_j(u)
\ee
Define $\mu_j = 0$ for $j\not\in J$, then we have $u\in K_x$.

\smallskip
\noindent {\bf Case $m_1+k < n$} \,
By the construction of polynomials $\varphi_i(x)$ in \reff{def:vphi:r},
some of them are
\[
\varphi_i^J(x) \, := \,
\eta_i(B^J(x)) \cdot \prod_{j\in J^c} g_j(x),
\quad i =1,\ldots, n(m_1+k+1)-(m_1+k+1)^2+1.
\]
So the equations $\varphi_i(u)=0$ imply every above $\varphi_i^J(u)=0$
(see its definition in \reff{df:vphi-J}).
Hence $B^J(u)$ is singular.
By Assumption~\ref{as:fin+smth}, the matrix $H(u)$ is nonsingular.
So there exist $\lmd_i$ and $\mu_j (j \in J)$ satisfying \reff{Df=Dh+Dg}.
Define $\mu_j = 0$ for $j\not\in J$, then we also have $u\in K_x$.

\medskip
Second, we prove $K_x \subset W$.  Choose an arbitrary $u\in K_x$ with $(u,\lmd,\mu) \in K$.
Let $I=\{ j \in [m_2]: \, g_j(u) = 0\}$. If $I=\emptyset$, then $\mu=0$,
and $\bbm \nabla f(u) & \nabla h(u)\ebm$ and $B^J(u)$ are both singular,
which implies all $\varphi_i(u)=0$ and $u\in W$.
If $I\ne \emptyset$, write $I=\{i_1,\ldots, i_t\}$.
Let $J=\{j_1,\ldots, j_k\} \subset [m_2]$ be an arbitrary index set
with $m_1+k \leq m$.

\noindent {\bf Case $I \nsubseteq J$} \,
At least one $j\in J^c$ belongs to $I$.
By choice of $I$, we know from \reff{df:vphi-J}
\[
\varphi_i^J(u) = \eta_i(B^J(u)) \cdot \prod_{j\in J^c} g_j(u) = 0.
\]

\smallskip
\noindent {\bf Case $I \subseteq J$}\, Then $\mu_j=0$ for all $j \in J^c$.
By definition of $K$, the matrix $B^J(u)$
must be singular. All polynomials $\varphi_i^J(x)$ vanish at $u$ by their construction.

Combining the above two cases, we know all $\varphi_i^J(x)$ vanish at $u$,
that is, $\varphi_1(u)=\cdots = \varphi_r(u)=0$.
So $u \in W$.
\end{proof}

\begin{lem} \label{lm:W-dcmp}
Suppose Assumption~\ref{as:fin+smth} is true.
Let $W$ be defined in \reff{def:var-W},
and $T=\{x \in \re^n:g_j(x)\geq 0, j =1,\ldots,m_2\}$.
Then there exist disjoint subvarieties $W_0,W_1, \ldots, W_r$ of $W$
and distinct $v_1,\ldots, v_r \in \re$ such that
\[
W=W_0 \cup W_1 \cup \cdots \cup W_r, \quad
 W_0 \cap T = \emptyset,  \quad
 W_i \cap T \ne \emptyset, i=1,\ldots,r,
\]
and $f(x)$ is constantly equal to $v_i$ on $W_i$ for $i=1,\ldots,r$.
\end{lem}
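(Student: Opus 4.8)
The plan is to reduce everything to the complex KKT variety $K$ of \reff{def:kkt-K} and to prove the one structural fact that drives the lemma: \emph{$f$ is constant on each irreducible component of $K$}. Granting this, the decomposition of $W$ is pure bookkeeping. By Lemma~\ref{lm:W=Kx} we have $W=K_x=\pi(K)$, where $\pi(x,\lmd,\mu)=x$ is the projection. Since $f$ depends only on $x$, it takes the same constant value on $\pi(Z)$ as on $Z$ for each irreducible component $Z$ of $K$; as $K$ has only finitely many components, $f$ attains finitely many values on $W$. For each such value $c$ the level set $W_c := W \cap \{x : f(x)=c\}$ is a subvariety, and the $W_c$ for distinct $c$ are pairwise disjoint and cover $W$, since every point of $W$ has a well-defined $f$-value.

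To match the statement I would collect those values $c$ with $W_c\cap T\ne\emptyset$ and call them $v_1,\ldots,v_r$, setting $W_i=W_{v_i}$; each such $v_i$ is real because $W_i\cap T$ contains a point of $\re^n$ at which the real polynomial $f$ takes the value $v_i$. The remaining level sets, namely those $W_c$ with $W_c\cap T=\emptyset$, I would union into a single subvariety $W_0$, which then satisfies $W_0\cap T=\emptyset$. This yields the required disjoint decomposition $W=W_0\cup W_1\cup\cdots\cup W_r$ with $f\equiv v_i$ on $W_i$.

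The heart of the proof, and the main obstacle, is the constancy of $f$ on an irreducible component $Z$ of $K$. First I would use that $\cpx[Z]$ is an integral domain: since each relation $\mu_j g_j=0$ holds identically on $K\supseteq Z$, for every $j$ either $\mu_j\equiv 0$ or $g_j\equiv 0$ on $Z$. Then I would pass to the smooth locus $Z^{sm}$, a connected complex manifold whose Euclidean closure is $Z$. At a smooth point $z=(x,\lmd,\mu)$ and a tangent vector $(\dot x,\dot\lmd,\dot\mu)\in T_zZ$, I would differentiate $f\circ\pi$ using the KKT identity $\nabla f=\sum_i\lmd_i\nabla h_i+\sum_j\mu_j\nabla g_j$, obtaining
\[
\nabla f(x)\cdot\dot x=\sum_i \lmd_i\,(\nabla h_i(x)\cdot\dot x)+\sum_j \mu_j\,(\nabla g_j(x)\cdot\dot x).
\]
Every $h_i$ vanishes on $Z$, so $\nabla h_i\cdot\dot x=0$; and for each $j$ the term $\mu_j(\nabla g_j\cdot\dot x)$ vanishes because either $\mu_j=0$ at $z$ (when $\mu_j\equiv0$ on $Z$) or $g_j$ vanishes on $Z$ and hence $\nabla g_j\cdot\dot x=0$. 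Thus the differential of $f\circ\pi$ is zero on all of $T_zZ$, so $f$ is locally constant on the connected manifold $Z^{sm}$, hence constant on $Z^{sm}$, and by continuity constant on $Z$.

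The points I expect to justify with care are that it suffices to test smooth points (using that $Z^{sm}$ is dense and Euclidean-connected for an irreducible complex variety, a standard fact), that tangent vectors to $Z$ satisfy the linearized equations $\nabla h_i\cdot\dot x=0$ and, for the vanishing factors, $\nabla g_j\cdot\dot x=0$, and the domain argument splitting each index $j$ into the cases $\mu_j\equiv0$ or $g_j\equiv0$. Once componentwise constancy is established, passing to $W$ and grouping level sets requires only that distinct level sets are disjoint subvarieties, which is immediate; note that Assumption~\ref{as:fin+smth} enters the argument implicitly through Lemma~\ref{lm:W=Kx}, which supplies the identity $W=K_x$.
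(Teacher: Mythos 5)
Your proof is correct and follows essentially the same route as the paper: reduce to the KKT variety $K$ via Lemma~\ref{lm:W=Kx}, use that $f$ is constant on each irreducible component of $K$, and then group by value and by whether the piece meets $T$. The only differences are that the paper simply cites Lemma~3.3 of \cite{DNP} for the constancy of $f$ on components (which you reprove, correctly, via the integral-domain dichotomy $\mu_j\equiv 0$ or $g_j\equiv 0$ and the vanishing differential on the smooth locus), and that you take level sets of $f$ in place of Zariski closures of the projections $\pi(K_i)$ --- a harmless variation that in fact sidesteps the need to close up the projections.
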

\begin{proof}
Let $K= K_1 \cup \cdots \cup K_r$ be a decomposition of irreducible varieties.
Then $f(x)$ is equaling a constant $v_i$ on each $K_i$, as shown by Lemma~3.3 in \cite{DNP}.
By grouping all $K_i$ for which $v_i$ are same into a single variety,
we can assume all $v_i$ are distinct.
Let $\widehat{W}_i$ be the projection of $K_i$ into $x$-space,
then by Lemma~\ref{lm:W=Kx} we get
\[
W = \widehat{W}_1 \cup \cdots \cup \widehat{W}_r.
\]
Let $W_i = Zar(\widehat{W}_i)$.
Applying Zariski closure in the above gives
\[
W = Zar(W) =  W_1 \cup \cdots \cup W_r.
\]
Note that $f(x)$ still achieves a constant value on each $W_i$.
Group all $W_j$ for which $W_j \cap T =\emptyset$ into a single variety $W_0$
(if every $W_j \cap T \ne \emptyset$ we set $W_0 = \emptyset$).
For convenience, we still write the resulting decomposition as
$W=W_0 \cup W_1 \cup \cdots \cup W_r$.
Clearly, $W_0 \cap T = \emptyset$,
and for $i>0$ the values $v_i$ are real and distinct
(because $\emptyset \ne W_i \cap T \subset \re^n$ and $f(x)$ has real coefficients).
Since $f(x)$ achieves distinct values on different $W_i$,
we know $W_i$ must be disjoint from each other.
Therefore, we get a desired decomposition for $W$.
\end{proof}

\begin{lem} \label{lm:unit-dcmp}
Let $I_0,I_1,\ldots,I_k$ be ideals of $\re[x]$ such that
$V(I_i) \cap V(I_j) = \emptyset$ for distinct $i,j$,
and $I = I_0 \cap I_1 \cap \cdots \cap I_k$.
Then there exist $a_0,a_1,\ldots, a_k \in \re[x]$ satisfying
\[
a_0^2+\cdots+a_k^2 -1 \, \in \, I, \quad  a_i \, \in \,
\bigcap_{ i \ne j \in \{0,\ldots,k\} } I_j.
\]
\end{lem}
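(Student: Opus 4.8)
The plan is to reduce the statement to a Chinese Remainder Theorem argument, after first upgrading the geometric disjointness hypothesis into algebraic comaximality of the ideals $I_0,\ldots,I_k$ over $\re[x]$.

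First I would show that the $I_i$ are pairwise comaximal in $\re[x]$, i.e. $I_i+I_j=\re[x]$ for $i\ne j$. The hypothesis $V(I_i)\cap V(I_j)=\emptyset$ is a statement about the complex varieties, so $V\big((I_i+I_j)\cpx[x]\big)=\emptyset$ in $\cpx^n$; by the weak Hilbert Nullstellensatz this forces $1\in(I_i+I_j)\cpx[x]$. Writing $1$ as a $\cpx[x]$-combination of real generators of $I_i$ and $I_j$ and then taking real parts (the left side $1$ and the generators being real) yields $1\in I_i+I_j$ inside $\re[x]$. This is the one place the hypothesis is genuinely used, and I expect it to be the main obstacle: one must be careful that disjointness is asserted for the \emph{complex} varieties — over $\re$ it would fail, since complex-conjugate intersection points can be hidden — and that the descent from $\cpx[x]$ to $\re[x]$ really preserves comaximality. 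Everything afterward is formal and holds in any commutative ring.

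Next, with pairwise comaximality in hand and $I=\bigcap_i I_i$, I would invoke the Chinese Remainder Theorem, which gives the isomorphism $\re[x]/I \cong \prod_{i=0}^k \re[x]/I_i$. Concretely, the standard fact that comaximality with each factor implies comaximality with their intersection gives $I_i+\bigcap_{j\ne i}I_j=\re[x]$ for each $i$, so I may write $1=u_i+b_i$ with $u_i\in I_i$ and $b_i\in\bigcap_{j\ne i}I_j$. Thus $b_i-1\in I_i$ and $b_i\in I_j$ for every $j\ne i$; the $b_i$ are precisely the lifts of the orthogonal idempotents of the product ring, and summing the relations gives $\sum_i b_i\equiv 1\pmod{I}$.

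Finally I would simply set $a_i=b_i$. By construction $a_i\in\bigcap_{j\ne i}I_j$, which is the second required condition. For the first, the idempotent identity does the work: since $b_i-1\in I_i$ and $b_i\in\bigcap_{j\ne i}I_j$, we get $b_i(b_i-1)\in I_i\cap\bigcap_{j\ne i}I_j = I$, so $b_i^2\equiv b_i\pmod I$; summing yields $\sum_i a_i^2=\sum_i b_i^2\equiv\sum_i b_i\equiv 1\pmod I$, i.e. $a_0^2+\cdots+a_k^2-1\in I$. The pleasant point is that no square roots are needed: because the $b_i$ are idempotent modulo $I$, their squares already reassemble the partition of unity.
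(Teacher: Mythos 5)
Your proof is correct, but it assembles the partition of unity differently from the paper. The paper argues by induction on $k$: at each step it invokes the weak Nullstellensatz to write $p+q=1$ with $p\in I_{t+1}$ and $q\in I_0\cap\cdots\cap I_t$, sets $a_i=b_ip$ and $a_{t+1}=q$, and closes the induction with the identity $\sum_i b_i^2p^2+q^2-1=(\sum_i b_i^2-1)p^2-2pq\in I$. You instead upgrade disjointness to pairwise comaximality once and for all (note the paper's Theorem~\ref{Null-Weak} is already stated for ideals of $\re[x]$ with empty \emph{complex} variety, so your descent from $\cpx[x]$ to $\re[x]$, while valid, is not even needed), then take the CRT idempotents $b_i$ and exploit $b_i^2\equiv b_i\pmod I$ so that the squares reassemble to $1$ without any inductive bookkeeping. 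This is cleaner and makes the structural reason for the lemma transparent. One small point to tighten: ``summing the relations $1=u_i+b_i$'' does not by itself give $\sum_ib_i\equiv 1\pmod I$ (it gives $k+1=\sum_iu_i+\sum_ib_i$ with $\sum_iu_i$ only in the sum of the ideals); the correct justification, which your idempotent framing supplies implicitly, is that for each fixed $j$ one has $b_j\equiv 1$ and $b_i\equiv 0$ for $i\ne j$ modulo $I_j$, hence $\sum_ib_i-1\in I_j$ for every $j$ and therefore lies in $I=\bigcap_jI_j$. With that sentence made explicit, the argument is complete.
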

\begin{proof}
We prove by induction.
When $k=1$, by Theorem~\ref{Null-Weak}, there exist
$p \in I_0, q\in I_1$ such that $p+q=1$.
Then $a_0=p,a_1=q$ satisfy the lemma.

Suppose the lemma is true for $k=t$. We prove it is also true for $k=t+1$.
Let $J= I_0 \cap \cdots \cap I_t$. By induction,
there exist $b_0,\ldots,b_t \in \re[x]$ such that
\[
b_0^2+\cdots+b_t^2 -1 \, \in \, J, \quad
b_i \, \in \, \bigcap_{ i \ne j \in \{0,\ldots,t\} } I_j, \quad i=0,\ldots,t.
\]
Since $V(I_{t+1})$ is disjoint from $V(J) = V(I_0) \cup \cdots \cup V(I_t)$,
by Theorem~\ref{Null-Weak},
there exist $p \in I_{t+1}$ and $q\in J$ such that $p+q = 1$.
Let $a_i = b_i p$ for $i=0,\ldots,t$ and $a_{t+1} = q$. Then
\[
a_i \in  \bigcap_{i \ne j \in \{0,\ldots,t+1\} } I_j, \quad i = 0,\ldots,t+1.
\]
Since $(p+q)^2=1, I =I_{t+1}\cap J$, we have $pq \in I$,
$(b_0^2+\cdots+b_t^2-1)p^2 \in I$, and
\begin{align*}
a_0^2+a_1^2+\cdots+a_{t+1}^2 - 1
= (b_0^2+\cdots+b_t^2-1)p^2 - 2pq \in I,
\end{align*}
which completes the proof.
\end{proof}

\begin{theorem} \label{pro:unf-deg}
Suppose Assumption~\ref{as:fin+smth} is true and
let $f^*$ be the minimum of \reff{pop:kktvar}.
Then there exists an integer $N^*>0$ such that for every $\eps>0$
\be
f(x) - f^* + \eps \, \in \, I^{(N^*)} + P^{(N^*)}.
\ee
\end{theorem}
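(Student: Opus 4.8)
The plan is to produce, for a single relaxation order $N^*$ and all $\eps>0$ simultaneously, an explicit membership certificate for $f-f^*+\eps$ in $I^{(N^*)}+P^{(N^*)}$, built from the geometric decomposition of $W$ and a polynomial partition of unity. First I would record the structural consequences of Lemma~\ref{lm:W-dcmp}: write $W=W_0\cup W_1\cup\cdots\cup W_r$ with $f\equiv v_i$ on $W_i$, $W_0\cap T=\emptyset$, and $W_i\cap T\neq\emptyset$ for $i\geq1$, where $T=\{x:g_j(x)\geq0\ \forall j\}$. Since the feasible set of \reff{pop:kktvar} is exactly $W\cap T$ and $W_0$ contributes nothing to it, we have $f^*=\min_{1\leq i\leq r}v_i$, hence $v_i-f^*\geq0$ for every $i\geq1$. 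The role of the additive $\eps$ is to make $f-f^*+\eps\geq\eps>0$ on every feasible piece, so that each constant $v_i-f^*+\eps$ is strictly positive.

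Next I would invoke Lemma~\ref{lm:unit-dcmp} for the disjoint components to obtain a partition of unity: polynomials $a_0,\ldots,a_r$ with $a_0^2+\cdots+a_r^2-1$ in the relevant ideal and each $a_i$ vanishing on every $W_j$ with $j\neq i$. Multiplying $f-f^*+\eps$ by $1\equiv\sum_i a_i^2$ modulo the ideal and replacing $f$ by its constant value $v_i$ on each piece gives the decomposition identity
\[
f-f^*+\eps=\sum_{i=0}^r (v_i-f^*+\eps)\,a_i^2+\theta_\eps,
\]
where $\theta_\eps=(f-f^*+\eps)(1-\sum_i a_i^2)+\sum_i (f-v_i)a_i^2$ vanishes on all of $W$. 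For each feasible index $i\geq1$ the summand $(v_i-f^*+\eps)a_i^2$ is a nonnegative constant times a square, hence lies in the preordering and contributes to $P^{(N^*)}$ once $N^*$ bounds its degree. I would then dispose of the infeasible component $W_0$ using its emptiness against $T$: because $W_0\cap T=\emptyset$, Stengle's Positivstellensatz furnishes $\sigma_0$ in the preordering of the $g_j$ and $\tau_0\in I(W_0)$ with $\sigma_0+\tau_0=-1$, so $\sigma_0^2\equiv1$ and $\sigma_0\equiv-1$ modulo $I(W_0)$. Using $a_0^2\sigma_0^2=(a_0\sigma_0)^2$ when $v_0-f^*+\eps\geq0$ and $a_0^2\sigma_0$ when it is negative, together with $a_0$ vanishing on $W_1\cup\cdots\cup W_r$, I can rewrite $(v_0-f^*+\eps)a_0^2$ as a preordering element plus a polynomial vanishing on $W$, for either sign of the constant. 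Since $\eps$ enters only through constant coefficients and the choice of sign branch, the degrees of all these certificates are bounded independently of $\eps$; taking $N^*$ to be this common bound makes the construction uniform in $\eps$, which is exactly what the statement demands.

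The hard part is the final bookkeeping that puts the ideal remainder into the \emph{genuine} truncated ideal $I^{(N^*)}$ generated by $h_i,\varphi_j$ in \reff{def:I^(N)}, rather than merely into the vanishing ideal $I(W)=\sqrt{(h_i,\varphi_j)}$ where the geometric decomposition naturally lives. I would close this gap by feeding into Lemma~\ref{lm:unit-dcmp} not the radical ideals $I(W_i)$ but a grouping of a primary decomposition of $(h_1,\ldots,h_{m_1},\varphi_1,\ldots,\varphi_r)$ along the disjoint pieces $W_i$, so that their intersection is the ideal itself and not just its radical, and by invoking the Nullstellensatz (Theorem~\ref{Null-Weak}) with an effective degree bound to absorb the $\theta_\eps$ contributions and the $W_0$ remainder into the generators. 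Verifying that $(f-v_i)a_i^2$ and the $W_0$ term really land in this ideal — that is, controlling the passage from ``$f$ is constant on $W_i$ as a set'' to honest ideal membership \emph{without} assuming $(h_i,\varphi_j)$ radical — is the main obstacle, and is precisely where the Jacobian construction of the $\varphi_j$ (rather than the plain KKT ideal of \cite{DNP}) must be exploited. Once this membership and the uniform degree bound are in hand, setting $N^*$ to the largest degree occurring in any of the above certificates yields $f-f^*+\eps\in I^{(N^*)}+P^{(N^*)}$ for all $\eps>0$; Theorem~\ref{thm:sos-exact} then follows by letting $\eps\to0$ and combining with the weak duality chain \reff{ineq:w-dual}.
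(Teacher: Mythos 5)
Your overall architecture matches the paper's: decompose $W$ via Lemma~\ref{lm:W-dcmp}, take a primary decomposition $I_W=E_0\cap\cdots\cap E_r$ aligned with the pieces $W_i$, build a partition of unity from Lemma~\ref{lm:unit-dcmp}, treat $W_0$ by the Positivstellensatz, and track degrees uniformly in $\eps$. But the step you yourself flag as ``the main obstacle'' is a genuine gap, and the direction you propose for closing it does not work. Your certificate leaves the remainder $\theta_\eps$, which contains the terms $(f-v_i)a_i^2$; all you know is that $f-v_i$ vanishes on $W_i=V(E_i)$, which by the strong Nullstellensatz gives only $(f-v_i)^{k_i}\in E_i$, never $f-v_i\in E_i$. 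An effective Nullstellensatz bounds the exponent $k_i$ and degrees, but it cannot convert membership in $\sqrt{E_i}$ into membership in $E_i$, and nothing about the Jacobian construction of the $\varphi_j$ makes $I_W$ radical (indeed Example~\ref{em-unc:0fail} is a case where $f-f^*\notin I^{(N)}+P^{(N)}$ for every $N$, which could not happen if your scheme went through at $\eps=0$). The paper's actual resolution is different: on each $E_i$ with $v_i>0$ it replaces the constant certificate $v_i+\eps$ by $\sigma_i=s_i(x)^2+\eps$, where $s_i$ is the binomial series for $\sqrt{v_i}\bigl(1+(v_i^{-1}f-1)\bigr)^{1/2}$ truncated at order $k_i$; then $f+\eps-\sigma_i$ is a polynomial multiple of $(v_i^{-1}f-1)^{k_i}\in E_i$, so the remainder lies in $E_i$ \emph{exactly}, with degree independent of $\eps$. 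For the bottom piece ($v_r=f^*$) the same trick is applied to $\sqrt{\eps}(1+\eps^{-1}f)^{1/2}$, giving a remainder $\sum_j c_j(\eps)f^{k_r+j}$ whose coefficients blow up as $\eps\to 0$ but whose degree is fixed --- this is precisely why the statement carries the $+\eps$ and why a single $N^*$ suffices. Without this (or an equivalent) device your proof does not close.

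A secondary error: you assign a single real constant $v_0$ to $W_0$ and branch on its sign, but Lemma~\ref{lm:W-dcmp} asserts that $f$ is constant only on $W_1,\ldots,W_r$; the grouped component $W_0$ is a union of pieces on which $f$ takes different, possibly non-real, values, so $(v_0-f^*+\eps)a_0^2$ is not well-defined. The paper avoids any reference to values of $f$ on $W_0$ by combining $-1\equiv\sum_\nu\tau_\nu g_\nu \bmod E_0$ with the identity $f=\tfrac14(f+1)^2-\tfrac14(f-1)^2$ to write $f$ itself as a preordering element modulo $E_0$.
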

\begin{proof}
Generally, we can assume $f^*=0$.
Decompose $W$ as in Lemma~\ref{lm:W-dcmp}. Then
\[
f^* \quad = \quad \min \{v_1,\ldots, v_r\}.
\]
Reorder $W_i$ such that $v_1 > v_2 > \cdots > v_r = 0.$
The ideal
\be  \label{df:I-W}
I_W \quad = \quad \langle h_1, \ldots, h_{m_1}, \varphi_1, \ldots, \varphi_r \rangle
\ee
has a primary decomposition (see Sturmfels \cite[Chapter~5]{Stu02})
\[
I_W = E_0 \,\cap \, E_1 \,\cap \,\cdots \,\cap \, E_r
\]
such that each ideal $E_i\subset \re[x]$ has variety $W_i = V(E_i)$.

\smallskip
When $i=0$, we have $V_{\re}(E_0) \cap T = \emptyset$
($T$ is defined in Lemma~\ref{lm:W-dcmp}).
By Theorem~\ref{Pos-Nul}, there exist SOS polynomials $\tau_\nu$ satisfying
\[
-1 \equiv \sum_{\nu \in \{0,1\}^{m_2} } \tau_\nu \cdot
g_\nu(x)  \quad mod \quad E_0 .
\]
Thus, from $f = \frac{1}{4} (f+1)^2 - \frac{1}{4} (f-1)^2$, we have
\begin{align*}
f & \equiv \frac{1}{4} \left\{
(f+1)^2 + (f-1)^2 \sum_{\nu \in \{0,1\}^{m_2} }
\tau_\nu \cdot g_\nu  \right\}
\quad mod \quad E_0   \\
& \equiv \sum_{\nu \in \{0,1\}^{m_2} }
\widehat{\tau_\nu}  \cdot g_\nu
\quad mod \quad E_0
\end{align*}
for certain SOS polynomials $\widehat{\tau_\nu}$. Let
\[
\sig_0 = \eps +
\sum_{\nu \in \{0,1\}^{m_2} } \widehat{\tau_\nu} \cdot g_\nu.
\]
Clearly, if $N_0>0$ is big enough, then
$\sig_0 \in P^{(N_0)}$ for all $\eps>0$. Let
$q_0 = f+\eps - \sig_0 \, \in \, E_0$,
which is independent of $\eps$.

\smallskip
For each $i=1,\ldots, r-1$, $v_i>0$ and $v_i^{-1} f(x) - 1$ vanishes on $W_i$.
By Theorem~\ref{Null-Strong}, there exists $k_i > 0$ such that
$
( v_i^{-1} f(x) - 1)^{k_i} \in E_i.
$
Thus, it holds that
\[
s_i(x) := \sqrt{v_i} \Big(1 \, + \, \big( v_i^{-1} f(x) - 1 \big)\Big)^{1/2}
\, \equiv \,  \sqrt{v_i}  \sum_{j=0}^{k_i-1} \binom{1/2}{j} (v_i^{-1} f(x) - 1)^j\,\,\,\,
mod\,\,\,\, E_i\, .
\]
Let $\sig_i =  s_i(x)^2 + \eps$, and
$q_i = f + \eps - \sig_i \, \in \, E_i$,
which is also independent of $\eps > 0$.

\smallskip
When $i=r$, $v_r=0$ and $f(x)$ vanishes on $W_r$.
By Theorem~\ref{Null-Strong}, there exists $k_r > 0$
such that $f(x)^{k_r} \in E_r$.
Thus we obtain that
\[
s_r(x) := \sqrt{\eps} \left(1 + \eps^{-1} f(x) \right)^{1/2}
 \equiv \sqrt{\eps}  \sum_{j=0}^{k_r-1} \binom{1/2}{j} \eps^{-j} f(x)^j\,\,\,\,
mod\,\,\,\, E_r\, .
\]
Let $\sig_r = s_r(x)^2$, and
$q_r = f + \eps - \sig_r  \, \in \,  E_r$.
Clearly, we have
\[
q_r(x) \, = \, \sum_{j=0}^{k_r-2} c_j(\eps) f(x)^{k_r+j}
\]
for some real scalars $c_j(\eps)$. Note each $f(x)^{k_r+j}  \in E_r$.

\bigskip 
Applying Lemma~\ref{lm:unit-dcmp} to ideals $E_0,E_1,\ldots,E_r$, 
we can find $a_0, \ldots, a_r \in \re[x]$ satisfying
\[
a_0^2+\cdots+a_r^2-1 \, \in \, I_W, \quad a_i \in
\bigcap_{i \ne j \in \{0,1,\ldots,r \} } E_j.
\]
Let $\sig = \sig_0 a_0^2+\sig_1 a_1^2 + \cdots + \sig_r a_r^2$, then
\begin{align*}
 f(x) + \eps  - \sig
 = \sum_{i=0}^r (f+\eps-\sig_i) a_i^2 + (f+\eps)(1-a_0^2-\cdots-a_r^2).
\end{align*}
Since $q_i = f+\eps-\sig_i \in E_i$, it holds that
\[
(f+\eps-\sig_i) a_i^2 \, \in \, \bigcap_{j=0}^r E_j = I_W.
\]
For each $0\leq i <r$, $q_i$ is independent of $\eps$.
There exists $N_1 >0$ such that for all $\eps >0$
\[
(f+\eps-\sig_i) a_i^2 \, \in \, I^{(N_1)}, \quad i=0,1,\ldots,r-1.
\]
For $i=r$, $q_r=f+\eps-\sig_r$ depends on $\eps$.
By the choice of $q_r$, it holds that
\[
(f+\eps-\sig_r) a_r^2  =  \sum_{j=0}^{k_r-2} c_j(\eps) f^{k_r+j} a_r^2.
\]
Note each $ f^{k_r+j} a_r^2 \in I_W$, since $f^{k_r+j} \in E_r$.
So, there exists $N_2 >0$ such that for all $\eps >0$
\[
(f+\eps-\sig_r) a_r^2 \, \in \, I^{(N_2)}.
\]
Since $1-a_1^2-\cdots-a_r^2 \in I_W$, there also exists $N_3 >0$ such that for all $\eps >0$
\[
(f+\eps)(1-a_1^2-\cdots-a_r^2) \, \in \, I^{(N_3)}.
\]
Combining the above, we know if $N^*$ is big enough, then
$
f(x) + \eps  - \sig \, \in \, I^{(N^*)}
$
for all $\eps >0$.
From the constructions of $\sig_i$ and $a_i$,
we know their degrees are independent of $\eps$.
So, $\sig  \, \in \, P^{(N^*)}$ for all $\eps >0$ if $N^*$ is big enough,
which completes the proof.
\end{proof}

Theorem~\ref{pro:unf-deg} is a kind of Positivstellensatz of
representing $f(x)-f_{min}+\eps$, which is positive on $S$ for all $\eps>0$,
by the preordering generated by $g_j$ modulo the ideal
$I_W$ in \reff{df:I-W} of variety $W$.
Usually, we can not conclude
$f(x)-f_{min} \in I^{(N^*)}+P^{(N^*)}$ by setting $\eps =0$,
because the coefficients of the representing polynomials
of $f(x)-f_{min}+\eps$ in $I^{(N^*)}+P^{(N^*)}$
go to infinity as $\eps \to 0$ (see $s_r(x)$ in the proof).
It is possible that
$f(x)-f_{min} \not\in I^{(N)}+P^{(N)}$ for every $N>0$.
Such a counterexample is Example~\ref{em-unc:0fail}.
However, Theorem~\ref{pro:unf-deg} shows that the degree bound $N^*$
required for representing $f(x)-f_{min}+\eps$ is independent of $\eps$.
This is a crucial property
justifying the exactness of the SDP relaxation \reff{las-sdp:deg=2N}.
Now we present its proof below.

\medskip

\noindent
{\it Proof of Theorem~\ref{thm:sos-exact}} \quad
By Theorem~\ref{pro:unf-deg}, there exists $N^*$
such that for every $\eps >0$
\[
f(x) - (f^*-\eps) \quad \in \quad I^{(N^*)} + P^{(N^*)}.
\]
Since $f_{N^*}^{(1)}, f_{N^*}^{(2)}$ are the optimal values of
\reff{las-sdp:deg=2N} and \reff{sos:deg=2N} respectively, we know
\[
f^*-\eps \leq  f_{N^*}^{(2)} \leq  f_{N^*}^{(1)} \leq f^*.
\]
Because $\eps >0$ is arbitrary, the above implies
$f_{N^*}^{(1)} =  f_{N^*}^{(2)} = f^*$.
Since the sequence $\{f_{N}^{(2)}\}$ is monotonically increasing and every
$f_{N}^{(2)} \leq f_{N}^{(1)} \leq f^*$ by \reff{ineq:w-dual},
we get $f_{N}^{(1)} =  f_{N}^{(2)} = f^*$ for all $N\geq N^*$.
If the minimum $f_{min}$ of \reff{pop:gen} is achievable,
then there exists $x^*\in S$ such that $f_{min}=f(x^*)$.
By Assumption~\ref{as:fin+smth}, we must have $x^* \in W$.
So $x^*$ is feasible for \reff{pop:kktvar}, and $f^* = f_{min}$.
Thus, we also have $f_{N}^{(1)} =  f_{N}^{(2)} = f_{min}$
for all $N\geq N^*$.
\qed

\bigskip

Last we prove Theorem~\ref{thm:con-gen} by using
the properties of resultants and discriminants
described in Appendix.

\medskip
\noindent
{\it Proof of Theorem~\ref{thm:con-gen} } \,
(a) If $Res(h_{i_1},\ldots,h_{i_{n+1}})\ne 0$, then the polynomial system
\[
h_{i_1}(x)=\cdots=h_{i_{n+1}}(x)=0
\]
does not have complex solution. Hence, $V(h)=\emptyset$ and consequently $S=\emptyset$.

(b) For a contradiction, suppose $n-m_1+1$ of
$g_j$ vanish at $u \in S$, say, $g_{j_1},\ldots, g_{j_{n-m_1+1}}$.
Then the polynomial system
\[
h_1(x)=\cdots=h_{m_1}(x)=g_{j_1}(x)=\cdots=g_{j_{n-m_1+1}}(x)=0
\]
has a solution, which contradicts
$Res(h_1,\ldots,h_{m_1},g_{j_1},\ldots,g_{j_{n-m_1+1}})\ne 0$.

(c) For every $J=\{j_1,\ldots,j_k\} \subset [m_2]$ with $k\leq n-m_1$, if
\[
\Delta(h_1,\ldots, h_{m_1}, g_{j_1},\ldots, g_{j_k}) \ne 0,
\]
then the polynomial system
\[
h_1(x)=\cdots=h_{m_1}(x)=g_{j_1}(x)=\cdots=g_{j_k}(x)=0
\]
has no singular solution, i.e., the variety $V(h,g_J)$ is smooth.

(d) Let $f_0(x)=f(x)-f_{min}$. Then $f_0$ lies on the boundary of the set
\[
P_d(S) \, = \, \Big\{ p\in B_d(S):\, p(x) \geq 0 \, \forall \, x \in S \Big\}.
\]
Since $S$ is closed at $\infty$,
by Prop.~6.1 of \cite{NieDis},  $f_0 \in \pt P_d(S)$ implies
\[
0 = \min_{\tilde{x} \in S^{prj}, \|\tilde{x}\|_2=1, x_0 \geq 0 }  \tilde{f}_0(\tilde{x}).
\]
Let $\tilde{u}=(u_0,u_1,\ldots,u_n) \ne 0$ be a minimizer of the above,
which must exist because the feasible set is compact.
We claim that $u_0 \ne 0$. Otherwise, suppose $u_0=0$.
Then $u=(u_1,\ldots,u_n) \ne 0$ is a minimizer of
\[
\baray{rl}
0 = \min  & f^{hom}(x) \\
s.t. &  h_1^{hom}(x) = \cdots =  h_{m_1}^{hom}(x) = 0, \\
& g_{1}^{hom}(x) \geq 0, \ldots,  g_{m_2}^{hom}(x) \geq 0.
\earay
\]
Let $j_1,\ldots,j_k \in [m_2]$ be the indices of active constraints.
By Fritz-John optimality condition (see Sec. 3.3.5 in \cite{Bsks}),
there exists $(\lmd_0, \lmd_1,\ldots,\lmd_{m_1},
\mu_1,\ldots,\mu_k)\ne 0$ satisfying
\[
\baray{c}
\lmd_0 \nabla f^{hom}(u) +
\overset{m_1}{ \underset{i=1}{\sum} } \lmd_i \nabla h_i^{hom}(u)
+ \cdots + \overset{k}{ \underset{\ell=1}{\sum} }
\mu_\ell \nabla g_{j_\ell}^{hom}(u) = 0, \\
f^{hom}(u) = h_1^{hom}(u) = \cdots = h_{m_1}^{hom}(u)=
g_{j_1}^{hom}(u) = \cdots = g_{j_k}^{hom}(u)=0.
\earay
\]
Thus, the homogeneous polynomial system
\[
f^{hom}(x) = h_1^{hom}(x) = \cdots = h_{m_1}^{hom}(x)=
g_{j_1}^{hom}(x) = \cdots = g_{j_k}^{hom}(x)=0
\]
has a nonzero singular solution.
Since the resultant of any $n$ of $h_i^{hom}, g_j^{hom}$ is nonzero,
we must have $m_1+k \leq n-1$. So the discriminant
\[
\Delta(f^{hom},h_1^{hom},\ldots, h_{m_1}^{hom},
g_{j_1}, \ldots, g_{j_k}^{hom})
\]
is defined and must vanish, which is a contradiction.
So $u_0 \ne 0$. Let $v = u/u_0$,
then $\tilde{u} \in S^{prj}$ implies $v\in S$ and
$
f(v)-f_{min} = u_0^{-d}\tilde{f_0}(\tilde{u}) = 0.
$

Clearly, (e) is true since it is a special case of (d).
\qed

\section{Some variations} \label{sec:var}
\setcounter{equation}{0}

This section presents some variations of
the exact SDP relaxation \reff{las-sdp:deg=2N} and its dual \reff{sos:deg=2N}.

\subsection{A refined version based on all maximal minors}

An SDP relaxation tighter than \reff{las-sdp:deg=2N}
would be obtained by using all the maximal minors to define
the determinantal variety $G_J$ in \reff{def:G_J},
while the number of equations would be significantly larger.
For every $ J = \{j_1,\ldots,j_k\} \subset [m_2]$ with $m_1+k \leq m$, let
\[
\tau_1^J,\ldots, \tau_{\ell}^J
\]
be all the maximal minors of $B^J(x)$ defined in \reff{def:G_J}.
Then define new polynomials
\be \label{eq-df:psi-J}
\psi_i^J(x) = \tau_i^J(x) \cdot \prod_{j \in J^c} g_j(x), \quad  i=1,\ldots,\ell.
\ee
List all such possible $\psi_i^J(x)$ as
\[
\psi_1(x), \, \psi_2(x), \, \ldots, \, \psi_t(x), \quad \mbox{ where } \quad
t \,= \, \sum_{J \subset [m_2], |J|\leq m-m_1 } \binom{n}{|J|+m_1+1}.
\]
Like \reff{pop:kktvar}, we formulate \reff{pop:gen} equivalently as
\be   \label{po:allminor}
\baray{rl}
\underset{x\in \re^n}{\min} & f(x) \\
\mbox{s.t.} & h_i(x) = \psi_j(x) = 0,  \, i\in [m_1], j \in [t], \\
& g_\nu(x) \geq 0, \,\forall \nu \in \{0,1\}^{m_2}.
\earay
\ee
The standard $N$-th order Lasserre's relaxation for the above is
\be  \label{las:allminor}
\baray{rl}
\min  &  L_f(y)  \\
\mbox{s.t.} & L_{h_i}^{(N)}(y)= 0,
L_{\psi_j}^{(N)}(y)= 0,  \, i\in [m_1], j \in [t], \\
& L_{g_\nu}^{(N)}(y) \succeq 0, \,\forall \nu \in \{0,1\}^{m_2}, \, y_0 = 1.
\earay
\ee
Note that every $\varphi_i^J$ in \reff{df:vphi-J}
is a sum of polynomials like $\psi_i^J(x)$ in \reff{eq-df:psi-J}.
So the equations $L_{\psi_j}^{(N)}(y)= 0$ in \reff{las:allminor}
implies $L_{\varphi_j}^{(N)}(y)= 0$ in \reff{las-sdp:deg=2N}.
Hence, \reff{las:allminor} is stronger than \reff{las-sdp:deg=2N}.
Its dual is an SOS program like \reff{sos:deg=2N}.
Theorem~\ref{thm:sos-exact} then implies the following.

\begin{cor}
Suppose Assumption~\ref{as:fin+smth} is true,
and the minimum $f_{min}$ of \reff{pop:gen} is achievable.
If $N$ is big enough, then the optimal value of
\reff{las:allminor} is equal to $f_{min}$.
\end{cor}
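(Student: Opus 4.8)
The plan is to prove the corollary by a simple sandwiching argument, exploiting the fact that \reff{las:allminor} lies between the already-analyzed relaxation \reff{las-sdp:deg=2N} and the true minimum $f_{min}$. Write $\theta_N$ for the optimal value of \reff{las:allminor}. I would establish two inequalities, $f_N^{(1)} \leq \theta_N \leq f_{min}$, and then conclude with Theorem~\ref{thm:sos-exact}.

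The first inequality is essentially the observation already recorded in the text: each $\varphi_i^J$ in \reff{df:vphi-J} is a linear combination of maximal minors, hence a sum of polynomials of the form $\psi_i^J$ in \reff{eq-df:psi-J}, so the constraints $L_{\psi_j}^{(N)}(y)=0$ force $L_{\varphi_j}^{(N)}(y)=0$. Thus every $y$ feasible for \reff{las:allminor} is feasible for \reff{las-sdp:deg=2N}, and minimizing $L_f(y)$ over the smaller feasible set gives $\theta_N \geq f_N^{(1)}$. For the second inequality I would first record that replacing the $\eta_\ell^J$ by the full list of maximal minors does not change the relevant variety: by the Bruns--Vetter lemma the common zero set of $\eta_1^J,\ldots,\eta_{len(J)}^J$ equals $G_J$, which is also the common zero set of all the maximal minors $\tau_i^J$ of $B^J(x)$. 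Multiplying by the common factor $\prod_{j\in J^c} g_j(x)$ preserves the zero set, so $V(\{\psi_i^J\}_i) = V(\{\varphi_i^J\}_i) = G_J \cup V\big(\prod_{j\in J^c} g_j\big)$ for every $J$. Intersecting over all admissible $J$ and with $V(h)$ shows that the variety cut out by $h_1=\cdots=h_{m_1}=\psi_1=\cdots=\psi_t=0$ coincides with the variety $W$ of \reff{def:var-W}. Consequently \reff{po:allminor} is equivalent to \reff{pop:gen} by exactly the reasoning used for \reff{pop:kktvar}: a minimizer $x^*$ of \reff{pop:gen}, which exists since $f_{min}$ is achievable, lies in $W$ by Lemma~\ref{lm:W=Kx} and Assumption~\ref{as:fin+smth}, hence satisfies all constraints of \reff{po:allminor}. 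The Dirac moment sequence $y^*_\af=(x^*)^\af$ is then feasible for \reff{las:allminor} with $L_f(y^*)=f(x^*)=f_{min}$, giving $\theta_N \leq f_{min}$.

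Finally I would invoke Theorem~\ref{thm:sos-exact}: for all $N\geq N^*$ one has $f_N^{(1)}=f^*=f_{min}$. Chaining the two inequalities yields $f_{min}=f_N^{(1)}\leq \theta_N \leq f_{min}$, so $\theta_N=f_{min}$ for $N$ big enough. I do not anticipate any genuine difficulty here, since the argument is purely a comparison of feasible sets. The one point deserving care is the variety-coincidence step in the second paragraph, namely confirming that passing from the $\eta_\ell^J$ to the full set of maximal minors leaves $G_J$ (and hence $W$) unchanged, so that Theorem~\ref{thm:sos-exact} applies to the same equivalent problem with the same value $f^*=f_{min}$.
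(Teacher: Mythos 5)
Your proposal is correct and follows essentially the same route as the paper: the paper likewise observes that each $\varphi_i^J$ is a sum of the $\psi_i^J$, so \reff{las:allminor} is a tightening of \reff{las-sdp:deg=2N}, and then invokes Theorem~\ref{thm:sos-exact}. The only difference is that you explicitly verify the upper bound $\theta_N\leq f_{min}$ (via the coincidence of the variety cut out by the $\psi_j$ with $W$ and feasibility of the Dirac moment vector of a minimizer), a step the paper leaves implicit.
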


\subsection{A Lasserre type variation without using cross products of $g_j$}

If the minimum $f_{min}$ of \reff{pop:gen} is achieved at a KKT point,
then \reff{pop:gen} is equivalent to
\be  \label{put:kktvar}
\baray{rl}
\underset{x\in \re^n}{\min} & f(x) \\
\mbox{s.t.} & h_i(x) = \varphi_j(x) = 0, \, i\in [m_1], j \in [r], \\
& g_1(x) \geq 0, \ldots, g_{m_2}(x) \geq 0.
\earay
\ee
The standard $N$-th order Lasserre's relaxation for \reff{put:kktvar} is
\be  \label{put-sdp:N}
\baray{rl}
\min  &  L_f(y)  \\
\mbox{s.t.} & L_{h_i}^{(N)}(y)= 0, L_{\varphi_j}^{(N)}(y)= 0, \, i\in [m_1], j \in [r], \\
& L_{g_i}^{(N)}(y) \succeq 0, \, i=0,1,\ldots, m_2, \, y_0 = 1.
\earay
\ee
The difference between \reff{put-sdp:N} and \reff{las-sdp:deg=2N} is that
the cross products of $g_j(x)$ are not used in \reff{put-sdp:N},
which makes the number of resulting LMIs much smaller.
Similar to $P^{(N)}$, define the truncated quadratic module $M^{(N)}$
generated by $g_i$ as
\be \label{def:M^(N)}
M^{(N)} = \left\{ \sum_{i=0}^{m_2} \sig_i(x) g_i(x):
\deg(\sig_i g_i) \leq 2N \right\}.
\ee
The dual of \reff{put-sdp:N} would be shown to be
the following SOS relaxation for \reff{put:kktvar}:
\be \label{put-sos:N}
\baray{rl}
\max  & \gamma   \\
\mbox{s.t.} & f(x) - \gamma \in I^{(N)}+ M^{(N)}.
\earay
\ee
Clearly, for the same $N$,
\reff{put-sos:N} is stronger than the standard
Lasserre's relaxation \reff{sos:Put}.
To prove \reff{put-sdp:N} and \reff{put-sos:N} are exact for some $N$,
we need the archimedean condition (AC) for $S$, i.e.,
there exist $R>0$, $\phi_1(x),\ldots,\phi_{m_1}(x) \in \re[x]$
and SOS $s_0(x),\ldots, s_{m_2}(x) \in \re[x]$ such that
\[
R-\|x\|_2^2 \, = \, \sum_{i=1}^{m_1} \phi_i(x) h_i(x) +
\sum_{j=0}^{m_2} s_j(x) g_j(x).
\]

\begin{theorem} \label{thm:las-exact}
Suppose Assumption~\ref{as:fin+smth} and the archimedean condition hold.
If $N$ is big enough, then the optimal values of
\reff{put-sdp:N} and \reff{put-sos:N} are equal to $f_{min}$.
\end{theorem}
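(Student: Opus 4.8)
The plan is to mirror the proof of Theorem~\ref{thm:sos-exact}, replacing the preordering $P^{(N)}$ by the quadratic module $M^{(N)}$ of \reff{def:M^(N)} and paying for this with the archimedean hypothesis. First I would record the duality: by the standard SDP duality theory of Lasserre~\cite{Las01} one verifies that \reff{put-sos:N} is weakly dual to \reff{put-sdp:N}, so writing $\theta_N$ and $\vartheta_N$ for the optimal values of \reff{put-sdp:N} and \reff{put-sos:N} we have $\vartheta_N \le \theta_N \le f_{min}$ for every $N$ (the right inequality because any feasible point of \reff{put:kktvar} yields a feasible moment vector). The archimedean condition forces $S$ to be compact, so $f_{min}$ is attained; as in the proof of Theorem~\ref{thm:sos-exact}, Assumption~\ref{as:fin+smth} makes a minimizer a KKT point, hence a point of $W=K_x$ by Lemma~\ref{lm:W=Kx}, so the minimum $f^*$ of \reff{put:kktvar} equals $f_{min}$. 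It therefore suffices to exhibit an $N^*$ with $f(x)-f^*+\eps \in I^{(N^*)}+M^{(N^*)}$ for all $\eps>0$; monotonicity of $\{\vartheta_N\}$ together with weak duality then forces $\theta_N=\vartheta_N=f^*=f_{min}$ for all $N\ge N^*$, exactly as before.

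The heart of the matter is this quadratic–module representation, and I would get it by auditing the proof of Theorem~\ref{pro:unf-deg} to see which multipliers already lie in $M^{(N)}$. After normalizing $f^*=0$, decompose $W=W_0\cup W_1\cup\cdots\cup W_r$ via Lemma~\ref{lm:W-dcmp}, take the primary decomposition $I_W=E_0\cap\cdots\cap E_r$ with $V(E_i)=W_i$, and order $v_1>\cdots>v_r=0$. For each component with $i\ge 1$, on which $f$ is the constant $v_i\ge 0$, the multipliers built there are $\sig_i=s_i(x)^2+\eps$ and $\sig_r=s_r(x)^2$, which are sums of squares and so lie in $M^{(N)}$ already; the identities $q_i=f+\eps-\sig_i\in E_i$ are untouched. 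Thus the only place where the original argument used the full preordering is the component $W_0$, through $\sig_0=\eps+\sum_\nu \widehat{\tau_\nu}\,g_\nu$, and that single step is what must be redone inside $M^{(N)}$.

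For $W_0$ we have $V_\re(E_0)\cap T=\emptyset$, and this is exactly where the archimedean hypothesis enters. Consider the archimedean module $M+E_0$, where $M$ is the quadratic module generated by $g_1,\ldots,g_{m_2}$: it contains $R-\|x\|_2^2$ from the archimedean representation, and its real nonnegativity set is $V_\re(E_0)\cap T=\emptyset$, so Putinar's Positivstellensatz~\cite{Put} (the vacuous case, in which an archimedean module with no admissible real points contains $-1$) gives sums of squares $s_0,\ldots,s_{m_2}$ with $-1\equiv\sum_{j=0}^{m_2}s_j\,g_j \pmod{E_0}$. Running $f=\tfrac14(f+1)^2-\tfrac14(f-1)^2$ modulo $E_0$ as before then yields $\sig_0=\eps+\sum_j \widehat{s_j}\,g_j\in M^{(N_0)}$ with $q_0=f+\eps-\sig_0\in E_0$. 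Finally I would assemble the global certificate just as in Theorem~\ref{pro:unf-deg}: pick $a_0,\ldots,a_r$ from Lemma~\ref{lm:unit-dcmp} and set $\sig=\sum_{i=0}^r \sig_i\,a_i^2$. Since each $\sig_i\in M^{(N)}$ and each $a_i^2$ is a square, every $\sig_i a_i^2$ remains in $M^{(N)}$, so $\sig\in M^{(N^*)}$, while the same telescoping identity shows $f+\eps-\sig\in I_W\subset I^{(N^*)}$ for $N^*$ large. The main obstacle is precisely the $W_0$ step: one must confirm that archimedeanness survives adjoining the ideal $E_0$ and that the empty–positivity–set case of Putinar's theorem applies, since here it is the leaner quadratic module, not the preordering, that is required to certify $-1$ on a set containing no real points.
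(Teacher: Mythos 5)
Your proposal is correct and follows the same architecture as the paper's proof: reduce everything to a degree-uniform representation $f-f^*+\eps\in I^{(N^*)}+M^{(N^*)}$ (the paper's Theorem~\ref{las-M:dg-bd}), obtain it by rerunning the proof of Theorem~\ref{pro:unf-deg} and observing that only the $W_0$ component used the full preordering, and fix that component using the archimedean condition. The one place you diverge is in how you certify $-1\in M+E_0$: the paper first invokes Stengle's Positivstellensatz (Theorem~\ref{Pos-Nul}) to get $-2\equiv\sum_\nu\eta_\nu g_\nu \pmod{E_0}$ and then applies Putinar's theorem to each summand $\tfrac{1}{2^{m_2}}+\eta_\nu g_\nu$, which is genuinely positive on the nonempty compact set $S$, whereas you apply Putinar directly to $-1$ on the empty set $V_\re(E_0)\cap T$. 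Your shortcut is legitimate --- the module $M+E_0$ is archimedean because $R-\|x\|_2^2\in M+\langle h\rangle\subset M+E_0$, and positivity on the empty set is vacuous, so the paper's own statement of Theorem~\ref{Put-Pos} already covers it --- and it saves a step; the paper's two-stage route only buys the comfort of never invoking the vacuous case of Putinar's theorem. One small point to keep explicit when you defer the assembly to Theorem~\ref{pro:unf-deg}: the multiplier $q_r=f+\eps-\sig_r$ does depend on $\eps$, and the uniform degree bound for $(f+\eps-\sig_r)a_r^2\in I^{(N_2)}$ comes from writing $q_r=\sum_j c_j(\eps)f^{k_r+j}$ with each $f^{k_r+j}\in E_r$; your phrase ``the identities are untouched'' should be read as including that step.
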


To prove Theorem~\ref{thm:las-exact}, we need the following.

\begin{theorem} \label{las-M:dg-bd}
Suppose Assumption~\ref{as:fin+smth} and the archimedean condition hold.
Let $f^*$ be the optimal value of \reff{put:kktvar}.
Then there exists an integer $N^*>0$ such that for every $\eps>0$
\be
f(x) - f^* + \eps \, \in \, I^{(N^*)} + M^{(N^*)}.
\ee
\end{theorem}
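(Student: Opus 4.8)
The plan is to mirror the proof of Theorem~\ref{pro:unf-deg} almost verbatim, replacing the preordering $P^{(N)}$ by the quadratic module $M^{(N)}$ of \reff{def:M^(N)}; the archimedean condition is precisely what licenses this substitution. As before, I would normalize $f^*=0$, decompose $W$ via Lemma~\ref{lm:W-dcmp} as $W_0\cup W_1\cup\cdots\cup W_r$ with $W_0\cap T=\emptyset$ and $f\equiv v_i$ on $W_i$, reorder so that $v_1>\cdots>v_r=0$, and take a primary decomposition $I_W=E_0\cap E_1\cap\cdots\cap E_r$ with $V(E_i)=W_i$. The construction of $\sig_i$ and $q_i=f+\eps-\sig_i\in E_i$ for $i=1,\ldots,r$ uses only that suitable powers of $v_i^{-1}f-1$ (resp.\ of $f$) lie in $E_i$, together with truncated square-root expansions; these steps involve no inequality constraints and carry over unchanged, each contributing a term that lands in the ideal part $I^{(N)}$ of \reff{def:I^(N)}.

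The one step that must be altered is the treatment of the component $W_0$, where Theorem~\ref{pro:unf-deg} invoked a Schm\"{u}dgen-type (preordering) representation. Here I would instead appeal to Putinar's Positivstellensatz \cite{Put}. Since the archimedean witness $R-\|x\|_2^2$ lies in the module $M^{(N)}$ and its defining identity places it in $M+E_0$ (the $h_i$ generators being absorbed into $E_0\supset I_W$), the quadratic module $M+E_0$ is archimedean. The real feasible set it cuts out is $T\cap V_\re(E_0)=\emptyset$, so the constant $-1$ is vacuously positive there, and Putinar's theorem yields SOS polynomials $s_0,\ldots,s_{m_2}$ with
\[
-1 \,\equiv\, \sum_{j=0}^{m_2} s_j(x)\,g_j(x) \quad \bmod\ E_0 .
\]
Substituting this into $f=\tfrac14(f+1)^2-\tfrac14(f-1)^2$ exactly as in the earlier proof, but multiplying $(f-1)^2$ against the \emph{module} representation, produces SOS polynomials $\widehat{s_j}$ with $f\equiv\sum_{j=0}^{m_2}\widehat{s_j}\,g_j\bmod E_0$. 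Setting $\sig_0=\eps+\sum_{j=0}^{m_2}\widehat{s_j}\,g_j$ then places $\sig_0$ in $M^{(N_0)}$ for $N_0$ large, with $q_0=f+\eps-\sig_0\in E_0$ independent of $\eps$.

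The final assembly is identical. Applying Lemma~\ref{lm:unit-dcmp} to $E_0,\ldots,E_r$ furnishes $a_0,\ldots,a_r\in\re[x]$ with $\sum_{i=0}^r a_i^2-1\in I_W$ and $a_i\in\bigcap_{j\ne i}E_j$. Put $\sig=\sum_{i=0}^r\sig_i a_i^2$. Since $\sig_0\in M^{(N_0)}$ and each $\sig_i$ ($i\ge1$) is SOS, multiplying by the SOS factors $a_i^2$ keeps $\sig$ in the module, so $\sig\in M^{(N^*)}$ for $N^*$ large; and the identity
\[
f+\eps-\sig=\sum_{i=0}^r(f+\eps-\sig_i)\,a_i^2+(f+\eps)\Big(1-\sum_{i=0}^r a_i^2\Big)
\]
shows $f+\eps-\sig\in I_W$ with all degrees bounded independently of $\eps$, hence $f+\eps-\sig\in I^{(N^*)}$. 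The main (indeed only) obstacle is justifying the module representation on $W_0$: one must verify that Putinar's theorem applies modulo the ideal $E_0$ against an empty feasible set, and this is exactly where the archimedean hypothesis enters. The preordering used in Theorem~\ref{pro:unf-deg} needed no global boundedness, whereas passing to the module $M^{(N)}$ forces reliance on the archimedean witness $R-\|x\|_2^2$; everything else is unchanged from the proof of Theorem~\ref{pro:unf-deg}.
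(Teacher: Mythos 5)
Your proof is correct, and for the one step that actually differs from Theorem~\ref{pro:unf-deg} you take a genuinely different (and more direct) route than the paper. You apply Putinar's Positivstellensatz once, modulo $E_0$, using that the quadratic module $M+E_0$ is archimedean (the AC witness $R-\|x\|_2^2$ lands in $M+E_0$ because $\langle h_1,\ldots,h_{m_1}\rangle\subset I_W\subset E_0$) and that $V_\re(E_0)\cap T=\emptyset$ makes $-1$ vacuously positive there. The paper instead argues in two stages: it first invokes Stengle's Positivstellensatz modulo $E_0$ to write $-2\equiv\sum_\nu \eta_\nu g_\nu \bmod E_0$ in the \emph{preordering}, then observes that each $\frac{1}{2^{m_2}}+\eta_\nu g_\nu$ is strictly positive on the full feasible set $S$ (nonempty and compact under AC) and applies Putinar's theorem modulo $\langle h_1,\ldots,h_{m_1}\rangle$ to convert each such term into a quadratic-module element, finally summing over $\nu$ and using $\langle h\rangle\subset E_0$ to get $-1\equiv\tau_0+\sum_i\tau_i g_i\bmod E_0$. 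Your single-step argument is shorter and is formally licensed by the paper's own statement of Putinar's theorem (the positivity hypothesis is vacuous on an empty set, and the standard proof shows an archimedean module with empty associated set is all of $\re[x]$); the paper's detour buys the minor comfort of only ever invoking Putinar for polynomials positive on a genuine nonempty compact set, sidestepping the degenerate empty-set case. The remainder of your argument — the components $E_1,\ldots,E_r$, the partition of unity from Lemma~\ref{lm:unit-dcmp}, and the $\eps$-independent degree bounds — matches the paper.
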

\begin{proof}
The proof is almost same as for Theorem~\ref{pro:unf-deg}.
We follow the same approach used there.
The only difference occurs for the case $i=0$ and $V_\re(E_0)\cap T = \emptyset$.
By Theorem~\ref{Pos-Nul},
there exist SOS polynomials $\eta_\nu$ satisfying
\[
-2 \equiv \sum_{\nu \in \{0,1\}^{m_2} } \eta_\nu \cdot
g_\nu  \quad mod \quad E_0 .
\]
Clearly, each
$\frac{1}{2^{m_2}} + \eta_\nu \cdot g_1^{\nu_1} \cdots g_{m_2}^{\nu_{m_2}}$ is positive on $S$.
Since AC holds, by Putinar's Positivtellensatz (Theorem~\ref{Put-Pos}),
there exist SOS polynomials $\theta_{\nu,i}$ such that
\[
\frac{1}{2^{m_2}}+\eta_\nu \cdot g_\nu
= \sum_{i=0}^{m_2} \theta_{\nu,i} g_i
\quad \mbox{ mod } \quad \langle h_1,\ldots,h_{m_1} \rangle.
\]
Hence, it holds that
\begin{align*}
-1 & \equiv \sum_{ \nu \in \{0,1\}^{m_2} } \,
\left( \frac{1}{2^{m_2}}+ \eta_\nu \cdot g_\nu \right)
\quad \mbox{ mod } \quad  \langle h_1,\ldots,h_{m_1} \rangle  + E_0 \\
& \equiv  \sum_{i=0}^{m_2}   \,
\left(  \sum_{ \nu \in \{0,1\}^{m_2} } \theta_{\nu,i}  \right) g_i
\quad \mbox{ mod } \quad E_0.
\end{align*}
The second equivalence above is due to the relation
\[
\langle h_1,\ldots,h_{m_1} \rangle
\, \subset \, I_W  \, \subset \, E_0.
\]
Letting $\tau_i =  \sum_{ \nu \in \{0,1\}^{m_2} } \theta_{\nu,i}$,
which is clearly SOS, we get
\[
-1 \equiv \tau_0 + \tau_1 g_1 + \cdots + \tau_{m_2} g_{m_2}
\quad mod \quad E_0.
\]
The rest of the proof is almost same as for Theorem~\ref{pro:unf-deg}.
\end{proof}

\bigskip
\noindent
{\it Proof of Theorem~\ref{thm:las-exact}} \quad
For convenience, still let $f_{N}^{(1)}, f_{N}^{(2)}$
be the optimal values of \reff{put-sdp:N} and \reff{put-sos:N} respectively.
From Theorem~\ref{las-M:dg-bd}, there exists an integer $N^*$
such that for all $\eps >0$
\[
f(x) - (f^*-\eps) \quad \in \quad I^{(N^*)} + M^{(N^*)}.
\]
Like in the proof of Theorem~\ref{thm:sos-exact}, we can similarly prove
$f_{N}^{(1)} =  f_{N}^{(2)} = f^*$ for all $N\geq N^*$.
Since AC holds, the set $S$ must be compact.
So the minimum $f_{min}$ of \reff{pop:gen} must be achievable.
By Assumption~\ref{as:fin+smth}, we know $f^* = f_{min}$,
and the proof is complete.
\qed

\subsection{A simplified version for inactive constraints}

Suppose in \reff{pop:gen} we are only interested in
a minimizer making all the inequality constraints inactive.
Consider the problem
\be  \label{pop:g>0}
\baray{rl}
\underset{x\in \re^n}{\min} & f(x) \\
\mbox{s.t.} & h_1(x) = \cdots = h_{m_1}(x) = 0, \\
& g_1(x) > 0, \ldots, g_{m_2}(x) > 0.
\earay
\ee
Let $u$ be a minimizer of \reff{pop:g>0}.
If $V(h)$ is smooth at $u$, there exist $\lmd_i$ such that
\[
\nabla f(u) \, = \lmd_1 \nabla h_1(u) + \cdots + \lmd_{m_1} \nabla h_{m_1}(u).
\]
Thus, $u$ belongs to the determinantal variety
\[
G_h = \left\{ x: \rank \bbm \nabla f(x) & \nabla h(x)  \ebm \leq m_1 \right\}.
\]
If $m_1 < n$,
let $\phi_1,\ldots,\phi_s$ be a minimum set of defining polynomials for $G_h$
by using formula \reff{def:eta_ell}.
If $m_1 = n$, then $G_h = \re^n$ and we do not need these polynomials;
set $s=0$, and $[s]$ is empty.
Then, \reff{pop:g>0} is equivalent to
\be  \label{ktvar:g>0}
\baray{rl}
\underset{x\in \re^n}{\min} & f(x) \\
\mbox{s.t.} & h_i(x) = 0, \phi_j(x) = 0, \, i\in [m_1], j\in [s], \\
& g_1(x) > 0, \ldots, g_{m_2}(x)>0.
\earay
\ee
The difference between \reff{ktvar:g>0} and \reff{pop:kktvar} is that
the number of new equations in \reff{ktvar:g>0} is
$s = O\Big(nm_1\Big)$, which is much smaller than $r$ in \reff{pop:kktvar}.
So, \reff{ktvar:g>0} is preferable to \reff{pop:kktvar}
when the inequality constraints are all inactive.
The $N$-th order Lasserre's relaxation for \reff{ktvar:g>0} is
\be  \label{las-N:g>0}
\baray{rl}
\min  &  L_f(y) \\
\mbox{s.t.} & L_{h_i}^{(N)}(y)= 0, L_{\phi_j}^{(N)}(y)= 0, \, i\in [m_1], j\in [s], \\
& L_{g_j}^{(N)}(y) \succeq 0, \, j=1,\ldots,m_2, \, y_0 = 1.
\earay
\ee
A tighter version than the above using cross products of $g_j$ is
\be  \label{schm-N:g>0}
\baray{rl}
\min  &  L_f(y) \\
\mbox{s.t.} & L_{h_i}^{(N)}(y)= 0, L_{\phi_j}^{(N)}(y)= 0, \, i\in [m_1], j\in [s], \\
& L_{g_\nu}^{(N)}(y) \succeq 0, \, \forall \nu \in \{0,1\}^{m_2}, \, y_0 = 1.
\earay
\ee
Define the truncated ideal $J^{(N)}$ generated by $h_i(x)$ and $\phi_j$ as
\[
J^{(N)} = \left\{
\sum_{i=1}^{m_1} p_i(x) h_i(x) + \sum_{j=1}^{s} q_j(x) \phi_j(x):
\baray{c}
\deg(p_ih_i) \leq 2N \quad \forall\, i\\
deg(q_j\phi_j) \leq 2N \quad \forall\,j
\earay
\right\}.
\]
The dual of \reff{las-N:g>0} is the SOS relaxation
\be \label{sos-las:g>0}
\baray{rl}
\max  & \gamma   \\
\mbox{s.t.} & f(x) - \gamma \in J^{(N)}+ M^{(N)}.
\earay
\ee
The dual of \reff{schm-N:g>0} is the SOS relaxation
\be \label{sch-sos:g>0}
\baray{rl}
\max  & \gamma   \\
\mbox{s.t.} & f(x) - \gamma \in J^{(N)}+ P^{(N)}.
\earay
\ee

The exactness of the above relaxations
is summarized as follows.

\begin{theorem} \label{thm:g>0}
Suppose the variety $V(h)$ is nonsingular
and the minimum $f_{min}$ of \reff{pop:g>0} is achieved
at some feasible $u$ with every $g_j(u)>0$.
If $N$ is big enough, then the optimal values of
\reff{schm-N:g>0} and \reff{sch-sos:g>0} are equal to $f_{min}$.
If, in addition, the archimedean condition holds for $S$, the optimal values of
\reff{las-N:g>0} and \reff{sos-las:g>0} are also equal to $f_{min}$
for $N$ big enough.
\end{theorem}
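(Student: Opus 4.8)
The plan is to reduce Theorem~\ref{thm:g>0} to the machinery already developed for Theorem~\ref{thm:sos-exact}, adapting it to the strict-inequality setting of \reff{pop:g>0}. The key observation is that the polynomials $\phi_1,\ldots,\phi_s$ defining $G_h$ play exactly the role that the $\varphi_j$ played before, but now there are no active inequality constraints to worry about, so the determinantal variety is built only from $\bbm \nabla f(x) & \nabla h(x) \ebm$. First I would establish the analogue of Lemma~\ref{lm:W=Kx}: if $V(h)$ is nonsingular, then the variety
\[
W' = \left\{ x \in \cpx^n: h_i(x) = \phi_j(x) = 0, \, i\in[m_1], j\in[s] \right\}
\]
equals the set of $x$-components of KKT points of \reff{pop:g>0} (with all multipliers $\mu_j = 0$). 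The argument is the same as in Lemma~\ref{lm:W=Kx}, but simpler: at any $u\in W'$, the rank condition $\rank\,\bbm \nabla f(u) & \nabla h(u)\ebm \leq m_1$ together with nonsingularity of $V(h)$ (so $\nabla h(u)$ has full rank $m_1$) forces $\nabla f(u)$ to lie in the column span of $\nabla h(u)$, giving the Lagrange multipliers $\lmd_i$. This shows \reff{pop:g>0} is equivalent to \reff{ktvar:g>0}, since the hypothesis guarantees a minimizer at a KKT point with every $g_j(u)>0$.

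Next I would obtain the decomposition and the uniform-degree Positivstellensatz. Since $W'=K_x'$ (the KKT projection), Lemma~3.3 of \cite{DNP} again gives that $f$ is constant on each irreducible component, so I can run the exact analogue of Lemma~\ref{lm:W-dcmp} to write $W' = W_0 \cup W_1 \cup \cdots \cup W_r$ with $f \equiv v_i$ on $W_i$, the $v_i$ distinct and real for $i>0$, and $W_0 \cap T = \emptyset$ where now $T=\{x: g_j(x)\geq 0\}$. The crucial point is that the whole argument of Theorem~\ref{pro:unf-deg} goes through verbatim with $I_W$ replaced by $I_{W'}=\langle h_i,\phi_j\rangle$ and $J^{(N)}$ in place of $I^{(N)}$: I decompose $I_{W'}=E_0\cap\cdots\cap E_r$, handle the component $W_0$ with Theorem~\ref{Pos-Nul}, use the fractional-power/Newton-binomial trick on each $W_i$ to produce SOS $\sig_i$ with $f+\eps-\sig_i\in E_i$, and glue with the partition of unity from Lemma~\ref{lm:unit-dcmp}. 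This yields
\[
f(x)-f^* + \eps \in J^{(N^*)} + P^{(N^*)}
\]
for a single $N^*$ independent of $\eps>0$, which by the same weak-duality squeeze as in the proof of Theorem~\ref{thm:sos-exact} gives exactness of \reff{schm-N:g>0} and \reff{sch-sos:g>0}.

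For the second half, under the additional archimedean hypothesis, I would instead invoke the argument of Theorem~\ref{las-M:dg-bd}: the only modification from the Schm\"udgen-type proof is in the treatment of the $W_0$ component, where Putinar's Positivstellensatz (Theorem~\ref{Put-Pos}) is used to convert the constant representation modulo $E_0$ into the quadratic module $M^{(N)}$ rather than the preordering $P^{(N)}$, giving $f-f^*+\eps \in J^{(N^*)}+M^{(N^*)}$ and hence exactness of \reff{las-N:g>0} and \reff{sos-las:g>0}. The main obstacle I anticipate is purely bookkeeping: verifying that the analogue of Lemma~\ref{lm:W-dcmp} still produces \emph{real} and \emph{distinct} values $v_i$ on the components meeting $T$, since here $T$ is the closure of the strict-feasibility region and one must confirm that the minimizer guaranteed by hypothesis (with all $g_j(u)>0$) indeed lands on one of the $W_i$ with $i>0$, so that $f^*=f_{min}$; this is where the strictness of the inequalities and the assumption that $f_{min}$ is attained with $g_j(u)>0$ are essential, and it must be checked carefully rather than inherited automatically from the earlier proof.
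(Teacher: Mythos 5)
Your proposal follows exactly the route the paper itself takes: the published proof of Theorem~\ref{thm:g>0} is only a short reduction to a decomposition ``like Lemma~\ref{lm:W-dcmp}'', a uniform-degree representation ``like Theorem~\ref{pro:unf-deg}'' (resp.\ Theorem~\ref{las-M:dg-bd} under the archimedean condition), and the weak-duality squeeze from Theorems~\ref{thm:sos-exact} and \ref{thm:las-exact}; your outline fills in the same steps in the same order, so in that sense it is at least as complete as the original.

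The one point worth sharpening is the concern you raise in your last paragraph, which is real but aimed at the wrong target. That the hypothesized minimizer $u$ lies on some component $W_i$ with $i>0$ is immediate: $u$ is a nonsingular point of $V(h)$, hence a Lagrange point, hence lies in $W'\cap T$. What actually requires an argument is the reverse inequality $f^*\geq f_{min}$, where $f^*=\min\{v_1,\ldots,v_r\}$ is the value your machinery delivers, namely the minimum of $f$ over $W'\cap\{g_1\geq 0,\ldots,g_{m_2}\geq 0\}$. The localizing conditions $L_{g_j}^{(N)}(y)\succeq 0$ cannot distinguish $g_j>0$ from $g_j\geq 0$, so a component of $W'$ that meets $T$ only at points where some $g_j$ vanishes --- points which need not lie in the closure of the strictly feasible set $\{h=0,\ g>0\}$ --- could a priori carry a value $v_i<f_{min}$, in which case the relaxations converge to that $v_i$ rather than to $f_{min}$. (In Theorem~\ref{thm:sos-exact} this cannot occur because the feasible set of \reff{pop:kktvar} is contained in $S$; here $W'\cap T$ is not contained in the feasible set of \reff{pop:g>0}.) To close this, one should observe that $f\geq f_{min}$ on the closure of $\{h=0,\ g>0\}$ by continuity and then verify, or assume, that every point of $W'\cap T$ with some $g_j$ vanishing lies in that closure; otherwise the conclusion should be stated as equality with $\min_{W'\cap T}f$. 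The paper's own proof sketch is silent on this point as well, so your proposal is not weaker than the original; but since you singled out this step as the one needing care, this is precisely where the care must be applied.
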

\begin{proof}
The proof is almost same as for
Theorems~\ref{thm:sos-exact} and \ref{thm:las-exact}.
We can first prove a decomposition result like Lemma~\ref{lm:W-dcmp},
and then prove there exists $N^*>0$ such that
for all $\eps > 0$ (like in Theorem~\ref{pro:unf-deg})
\[
f(x) - f_{min} + \eps \, \in \, J^{(N^*)} + P^{(N^*)}.
\]
Furthermore, if AC holds, we can similarly prove there exists $N^*>0$ such that
for all $\eps >0$ (like in Theorem.~\ref{las-M:dg-bd})
\[
f(x) - f_{min} + \eps \, \in \, J^{(N^*)} + M^{(N^*)}.
\]
The rest of the proof is almost same
as for Theorems~\ref{thm:sos-exact} and \ref{thm:las-exact}.
Due to its repeating, we omit the details here for the cleanness of the paper.
\end{proof}

\section{Examples} \label{sec:exmp}
\setcounter{equation}{0}

This section presents some examples on
how to apply the SDP relaxation \reff{las-sdp:deg=2N} and its dual \reff{sos:deg=2N}
to solve polynomial optimization problems.
The software {\it GloptiPoly~3} \cite{GloPol3}
is used to solve \reff{las-sdp:deg=2N} and \reff{sos:deg=2N}.

First, we consider an unconstrained optimization.

\begin{exm} \label{em-unc:0fail}
Consider problem
\[
\min_{x\in\re^3} \quad
x_1^8+x_2^8+x_3^8+x_1^4x_2^2+x_1^2x_2^4+x_3^6-3x_1^2x_2^2x_3^2.
\]
This example was studied in \cite{NDS}.
Its global minimum is zero.
We apply SDP relaxation \reff{las-sdp:deg=2N} of order $N=4$,
and get a lower bound $-9.7 \times 10^{-9}$.
The minimizer $(0,0)$ is extracted.
In \cite{NDS}, it was shown that $f(x)$ is not SOS modulo its gradient ideal $I_{grad}$.
But for every $\eps >0$, $f(x)+\eps \equiv s_\eps(x)$ modulo $I_{grad}$
for some SOS $s_\eps(x)$,
whose degree is independent of $\eps$ (see equation (10) of \cite{NDS}).
But its coefficients go to infinity as $\eps \to 0$.
This shows that the optimal value of \reff{sos:deg=2N}
might not be achievable.
\qed
\end{exm}

Second, we consider polynomial optimization having only equality constraints.
\be \label{opt:eq}
\underset{x\in \re^n}{\min} \quad f(x) \quad
\mbox{s.t.} \quad h_1(x) = \cdots = h_{m}(x) = 0.
\ee
When $V(h)$ is nonsingular, its equivalent version \reff{pop:kktvar} reduces to
\be \label{kkt-opt:g=0}
\baray{rl}
\underset{x\in \re^n}{\min} & f(x) \\
\mbox{s.t.} & h_1(x) = \cdots = h_{m}(x) = 0, \\
& \, \underset{ \substack{ I \in [n]_{m+1} \\  sum(I) = \ell}  }{\sum} \mbox{det}_I F(x) = 0,
\quad \ell = \binom{m+2}{2}, \ldots, (n-\frac{m}{2})(m+1).
\earay
\ee
In the above $sum(I)$ denotes the summation of the indices in $I$,
$F(x)= \bbm \nabla f(x) & \nabla h(x)\ebm$, and $\det_I F(x)$
denotes the maximal minor of $F(x)$ whose row indices are in $I$.
When $m \geq n$, there are no minor equations in \reff{kkt-opt:g=0}.

\begin{exm}
Consider the optimization
\begin{align*}
\min_{x\in\re^3} & \quad x_1^6+x_2^6+x_3^6 + 3x_1^2x_2^2x_3^2
-3(x_1^2(x_2^4+x_3^4)+x_2^2(x_3^4+x_1^4)+x_3^2(x_1^4+x_2^4)) \\
\mbox{s.t.} &  \quad x_1+x_2+x_3 - 1 =0.
\end{align*}
The objective is the Robinson polynomial,
which is nonnegative everywhere but not SOS \cite{Rez00}.
So the minimum $f_{min}=0$.
We apply SDP relaxation \reff{las-sdp:deg=2N} of order $N=4$,
and get a lower bound $-4.4600\times 10^{-9}$.
The minimizer $(1/3,1/3,1/3)$ is also extracted.
Applying Lasserre's relaxation \reff{sos:Put} of orders $N=3,4,5,6,7$,
we get lower bounds respectively
\[
-0.0582, \quad -0.0479, \quad -0.0194, \quad -0.0053, \quad -4.8358\times 10^{-5}.
\]
We can see that \reff{sos:Put} is weaker than \reff{las-sdp:deg=2N}.
It is not clear whether the sequence of relaxations \reff{sos:Put}
converges or not for this problem,
since the feasible set is noncompact.
But, the $f(x)$ here is not SOS modulo the constraint in this example.
Otherwise, suppose there exist polynomials $\sig(x)$ being SOS and $\phi(x)$ such that
\[
f(x) = \sig(x) + \phi(x)(x_1+x_2+x_3 - 1).
\]
In the above, replacing every $x_i$ by $x_i/(x_1+x_2+x_3)$ gives
\[
f(x) = (x_1+x_2+x_3)^6 \sig(x/(x_1+x_2+x_3)).
\]
So, there exist polynomials $p_1,\ldots, p_k, q_1,\ldots, q_\ell$ such that
\[
f(x) = p_1^2+\cdots+p_k^2 + \frac{q_1^2}{(x_1+x_2+x_3)^2} + \cdots +
\frac{q_\ell^2}{(x_1+x_2+x_3)^{2\ell}}.
\]
Since the objective $f(x)$ does not have any pole,
every $q_i$ must vanish whenever $x_1+x_2+x_3=0$.
Thus $q_i=(x_1+x_2+x_3)^{i}w_i$ for some polynomials $w_i$.
Hence, we get
\[
f(x) = p_1^2+\cdots+p_r^2 + w_1^2 + \cdots + w_\ell^2
\]
is SOS, which is a contradiction.
\qed
\end{exm}

Third, consider polynomial optimization having only a single inequality constraint.

\be \label{opt:one-eq}
\underset{x\in \re^n}{\min} \quad f(x) \quad
\mbox{s.t.} \quad g(x) \geq 0.
\ee
Its equivalent form \reff{pop:kktvar} becomes
\be
\baray{rl}
\underset{x\in \re^n}{\min} & f(x) \\
\mbox{s.t.} & g(x) \frac{\pt f(x)}{\pt x_i} = 0, \quad
\, i = 1,\ldots,n, \quad g(x) \geq 0, \\
& \underset{i+j=\ell}{\sum} \left( \frac{\pt f(x)}{\pt x_i} \frac{\pt g(x)}{\pt x_j} -
 \frac{\pt f(x)}{\pt x_j} \frac{\pt g(x)}{\pt x_i} \right) = 0, \quad
 \ell = 3, \ldots, 2n-1.
\earay
\ee
There are totally $3(n-1)$ equalities and a single inequality.

\begin{exm}  \label{em:Mzkn-ball}
Consider the optimization
\begin{align*}
\min_{x\in\re^3} & \quad x_1^4x_2^2+x_1^2x_2^4+x_3^6-3x_1^2x_2^2x_3^2 \\
\mbox{s.t.} &  \quad x_1^2+x_2^2+x_3^2 \leq 1.
\end{align*}
The objective is the Motzkin polynomial
which is nonnegative everywhere but not SOS \cite{Rez00}.
So its minimum is $0$.
We apply SDP relaxation \reff{las-sdp:deg=2N} of order $N=4$,
and get a lower bound $-1.6948 \times 10^{-8}$.
The minimizer $(0,0,0)$ is also extracted.
Now we apply Lasserre's relaxation \reff{sos:Put}.
For orders $N=4,5,6,7,8$, \reff{sos:Put} returns the lower bounds respectively
\[
-2.0331 \times 10^{-4},  -2.9222 \times 10^{-5},
 -8.2600 \times 10^{-6},  -4.2565 \times 10^{-6},
 -2.3465 \times 10^{-6}.
\]
We can see that \reff{sos:Put} is weaker than \reff{las-sdp:deg=2N}.
The sequence of \reff{sos:Put} certainly converges since the feasible set is compact.
However, the objective does not belong to the preordering
generated by the ball condition.
This fact was kindly pointed out to the author by Claus Scheiderer
(implied by his proof of Prop.~6.1 in \cite{Sch99},
since the objective is a nonnegative but non-SOS form vanishing at origin).
\qed
\end{exm}

\begin{exm}
Consider Example~\ref{em:Mzkn-ball}
but the constraint is the exterior of the ball:
\begin{align*}
\min_{x\in\re^3} & \quad x_1^4x_2^2+x_1^2x_2^4+x_3^6-3x_1^2x_2^2x_3^2 \\
\mbox{s.t.} &  \quad x_1^2+x_2^2+x_3^2 \geq 1.
\end{align*}
Its minimum is still $0$.
We apply SDP relaxation \reff{las-sdp:deg=2N} of order $N=4$,
and get a lower bound $1.7633 \times 10^{-9}$
(its sign is not correct due to numerical issues).
Now we compare it with Lasserre's relaxation \reff{sos:Put}.
When $N=4$, \reff{sos:Put} is not feasible.
When $N=5,6,7,8$, \reff{sos:Put} returns the following lower bounds respectively
\[
-4.8567 \times 10^{5}, \quad -98.4862, \quad -0.7079,  \quad    -0.0277.
\]
So we can see \reff{sos:Put} is much weaker than \reff{las-sdp:deg=2N}.
It is not clear whether \reff{sos:Put} converges or not for this problem,
since its feasible set is unbounded.
\qed
\end{exm}

Last, we show some general examples.

\begin{exm}
Consider the following polynomial optimization
\[
\baray{rl}
\underset{x\in\re^2}{\min} & \quad x_1^2+x_2^2 \\
\mbox{s.t.} & x_2^2 -1 \geq 0, \\
& x_1^2-Mx_1x_2 -1 \geq 0, \\
& x_1^2+Mx_1x_2 -1 \geq 0. \\
\earay
\]
This problem was studied in \cite{DNP,HLNZ}.
Its global minimum is $2+\half M(M+\sqrt{M^2+4})$.
Let $M=5$ here.
Applying \reff{las-sdp:deg=2N} of order $N=4$,
we get a lower bound $27.9629$ which equals the global minimum, and
four global minimizers $(\pm 5.1926, \pm 1.0000)$.
However, if we apply the Lasserre's relaxation either \reff{sos:Put} or \reff{sos:Smg},
the best lower bound we would obtain is $2$,
no matter how big the relaxation order $N$ is (see Example~4.5 of \cite{DNP}).
\qed
\end{exm}

\begin{exm}
Consider the polynomial optimization
\[
\baray{rl}
\underset{x\in\re^3}{\min} & x_1^4x_2^2+x_2^4x_3^2+x_3^4x_1^2-3x_1^2x_2^2x_3^2 \\
\mbox{s.t.} & 1-x_1^2 \geq 0, 1-x_2^2 \geq 0, 1-x_3^2 \geq 0.
\earay
\]
The objective is a nonnegative form being non-SOS \cite[Sec.~4c]{Rez00}.
Thus its minimum is $0$.
We apply SDP relaxation \reff{las-sdp:deg=2N} of order $N=6$,
and get a lower bound $-9.0752\times 10^{-9}$.
A minimizer $(0,0,0)$ is also extracted.
Now we apply Lasserre's relaxation of type \reff{sos:Smg}.
For $N=6,7,8$, \reff{sos:Smg} returns the lower bounds respectively
\[
-3.5619\times 10^{-5}, \quad -1.0406\times 10^{-5},  \quad  -7.6934\times 10^{-6}.
\]
So we can see that \reff{sos:Smg} is weaker than \reff{las-sdp:deg=2N}.
They converge in this case, since the feasible set is compact.
However,
the objective does not belong to the preordering generated by the constraints,
which is implied by the proof of Prop.~6.1 of \cite{Sch99}
(the objective is a nonnegative but non-SOS form vanishing at origin).
\qed
\end{exm}

\section{Some conclusions and discussions} \label{sec:con-dis}
\setcounter{equation}{0}

This paper proposes the exact SDP relaxation \reff{las-sdp:deg=2N}
and its dual \reff{sos:deg=2N}
for polynomial optimization \reff{pop:gen}
by using the Jacobian of its defining polynomials.
Under some generic conditions,
we showed that the minimum of \reff{pop:gen}
would be found by solving the SDP \reff{las-sdp:deg=2N}
for a finite relaxation order.

The results of this paper improve the earlier work \cite{DNP,NDS},
where the exactness of gradient or KKT type SOS relaxations
for a finite relaxation order is only proved when
the gradient or KKT ideal is radical.
There are other conditions like boundary hessian condition (BHC)
guaranteeing this property, like in \cite{Hiep10,Mar09}.
In \cite{Mar09}, Marshall showed that the gradient SOS relaxation is also
exact for a finite relaxation order by assuming BHC, in unconstrained optimization.
In \cite{Hiep10}, Hiep proposed a KKT type SOS relaxation using critical variety
for constrained optimization,
and its exactness for a finite relaxation order is also presented under BHC.
In this paper, the exactness of \reff{las-sdp:deg=2N} and \reff{sos:deg=2N}
for a finite $N$ is proved without the conditions like radicalness or BHC.
The only assumptions required are nonsingularity of $S$ and
the minimum $f_{min}$ being achievable
(the earlier related work also requires this),
but they are generically true as shown by Theorem~\ref{thm:con-gen}.

We would like to point out that the KKT type SOS relaxation
proposed in \cite{DNP} using Lagrange multipliers
is also exact for a finite order,
no matter the KKT ideal is radical or not.
This would be proved in a similar way as we did in Section~\ref{sec:proof}.
First, we can get a similar decomposition for the KKT variety
like Lemma~\ref{lm:W-dcmp}.
Second, we can prove a similar representation
for $f(x)-f^*+\eps$ like in Theorem~\ref{pro:unf-deg},
with degree bounds independent of $\eps$.
Based on these two steps, we can similarly prove 
its exactness for a finite relaxation order.
Since the proof is almost a repeating of Section~\ref{sec:proof},
we omit it for the cleanness of the paper.

The proof of the exactness of \reff{las-sdp:deg=2N}
provides a representation of polynomials that are positive on $S$
through using the preordering of $S$ and
the Jacobian of all the involved polynomials.
A nice property of this representation is that
the degrees of the representing polynomials
are independent of the minimum value.
This is presented by Theorem~\ref{pro:unf-deg}.
A similar representation result using the quadratic module of $S$
is given by Theorem~\ref{las-M:dg-bd}.

An issue that is not addressed by the paper is that
the feasible set $S$ has singularities.
If a global minimizer $x^*$ of \reff{pop:gen} is singular on $S$,
then the KKT condition might no longer hold, and $x^* \not\in W$.
In this case, the original optimization \reff{pop:gen}
is not equivalent to \reff{pop:kktvar},
and the SDP relaxation \reff{las-sdp:deg=2N} might not
give a correct lower bound for $f_{min}$.
It is not clear how to handle singularities
generally in an efficient way.

Another issue that is not addressed by the paper is
the minimum $f_{min}$ of \reff{pop:gen} is not achievable,
which happens only if $S$ is noncompact.
For instance, when $S=\re^2$, the polynomial
$x_1^2+(x_1x_2-1)^2$ has minimum $0$
but it is not achievable.
If applying the relaxation \reff{las-sdp:deg=2N} for this instance,
we would not get a correct lower bound.
Generally, this case will not happen, as shown by items (d), (e) of Theorem~\ref{thm:con-gen}.
In unconstrained optimization,
when $f_{min}$ is not achievable,
excellent approaches are proposed in \cite{GEZ,HaPh,Swg06}.
It is an interesting future work to generalize them to constrained optimization.

An important question is for what concrete relaxation order $N^*$
the SDP relaxation \reff{las-sdp:deg=2N} is exact for solving \reff{pop:gen}.
No good estimates for $N^*$ in Theorem~\ref{thm:sos-exact} are available currently.
Since the original problem \reff{pop:gen} is NP-hard,
any such estimates would be very bad if they exist.
This is another interesting future work.

\bigskip
\noindent
{\bf Acknowledgement} \, The author is grateful to Bernd Sturmfels
for pointing out the references on minimum
defining equations for determinantal varieties.
The author thanks Bill Helton for fruitful discussions.

\appendix
\section{Some basics in algebraic geometry and real algebra}

In this appendix,
we give a short review on basic algebraic geometry and real algebra.
More details would be found in the books \cite{CLO97,Ha}.

An ideal $I$ of $\re[x]$ is a subset such that $ I \cdot \re[x] \subseteq I$.
Given polynomials $p_1,\ldots,p_m \in \re[x]$,
$\langle p_1,\cdots,p_m \rangle $ denotes
the smallest ideal containing every $p_i$, which is the set
$p_1 \re[x] + \cdots + p_m \re[x]$.
The ideals in $\cpx[x]$ are defined similarly.
An algebraic variety is a subset of $\cpx^n$ that
are common complex zeros of polynomials in an ideal.
Let $I$ be an ideal of $\re[x]$. Define
\begin{align*}
V(I) &= \{x\in \cpx^n: \, p(x) = 0 \, \quad \forall \, p \in I\},  \\
V_\re(I) &= \{x\in \re^n: \, p(x) = 0 \, \quad \forall \, p \in I\}.
\end{align*}
The $V(I)$ is called an algebraic variety or just a variety,
and $V_\re(I)$ is called a real algebraic variety or just a real variety.
Every subset $T \subset \cpx^n$ is contained in a variety in $\cpx^n$.
The smallest one containing $T$
is called the {\it Zariski} closure of $S$, and is denoted by $Zar(T)$.
In the Zariski topology on $\cpx^n$, the varieties are called closed sets,
and the complements of varieties are called open sets.
A variety $V$ is irreducible if there exist no
proper subvarieties $V_1,V_2$ of $V$ such that $V= V_1 \cup V_2$.
Every variety is a finite union of irreducible varieties.

%

\begin{theorem} [Hilbert's Strong Nullstellensatz] \label{Null-Strong}
Let $I\subset \re[x]$ be an ideal. If $p\in \re[x]$ vanishes on $V(I)$,
then $p^k\in I$ for some integer $k>0$.
\end{theorem}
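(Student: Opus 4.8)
The plan is to deduce this real-coefficient version from the classical Hilbert Nullstellensatz over the algebraically closed field $\cpx$, and then to descend the resulting membership from $\cpx[x]$ back down to $\re[x]$. The point is that $V(I)$ is defined here as a subset of $\cpx^n$, so the geometry already lives over $\cpx$; only the coefficients of the relation need extra care.

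First I would complexify. Let $I_\cpx = I \cdot \cpx[x]$ be the ideal of $\cpx[x]$ generated by $I$. Any generating set of $I$ consists of real polynomials and also generates $I_\cpx$, so the two ideals cut out the same subset of $\cpx^n$, i.e. $V(I) = V(I_\cpx)$. Hence a polynomial $p \in \re[x]$ vanishing on $V(I)$ also vanishes on $V(I_\cpx)$.

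Next I would invoke the classical Strong Nullstellensatz over $\cpx$ to get $p^k \in I_\cpx$ for some integer $k > 0$. This step is standard (see \cite{CLO97}) and rests on the Rabinowitsch trick: adjoining a fresh variable $y$, one forms the ideal $J = I_\cpx + \langle 1 - y\,p \rangle \subset \cpx[x,y]$; since $p$ vanishes on $V(I_\cpx)$, the variety $V(J)$ is empty in $\cpx^{n+1}$, so by the Weak Nullstellensatz (Theorem~\ref{Null-Weak}) one has $1 \in J$; substituting $y = 1/p$ into this relation and clearing denominators yields $p^k \in I_\cpx$ for a suitable $k$.

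Finally I would descend to the real ideal. Writing $p^k = \sum_j c_j g_j$ with generators $g_j \in I$ and coefficients $c_j \in \cpx[x]$, I would split each coefficient as $c_j = a_j + \sqrt{-1}\,b_j$ with $a_j, b_j \in \re[x]$. Because $p^k$ and all the $g_j$ are real, comparing real parts gives $p^k = \sum_j a_j g_j$, an identity with $a_j \in \re[x]$ and $g_j \in I$; hence $p^k \in I$, as claimed. The genuine mathematical content is the Weak Nullstellensatz over $\cpx$, which I take as known; the one point deserving explicit verification is precisely this last descent, since the classical theorem only delivers membership in the complex ideal $I_\cpx$ and one must check that passing to real parts lands the conclusion inside the real ideal $I$ — which it does, as $I$ is an ideal of $\re[x]$ and the $g_j$ are real.
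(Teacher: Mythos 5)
The paper states this theorem in its appendix as standard background, with no proof of its own (the reader is implicitly referred to the textbooks \cite{CLO97,Ha}), so there is nothing internal to compare your argument against. Your proof is correct and is the standard derivation: since the paper's $V(I)$ already lives in $\cpx^n$, extending to $I_\cpx = I\cdot\cpx[x]$ changes nothing geometrically, the classical Nullstellensatz over $\cpx$ gives $p^k\in I_\cpx$ via Rabinowitsch, and taking real parts of the coefficients in a representation $p^k=\sum_j c_j g_j$ (with real generators $g_j\in I$) lands $p^k$ back in $I$. The one small imprecision is your parenthetical appeal to Theorem~\ref{Null-Weak} inside the Rabinowitsch step: that theorem as stated in the paper concerns ideals of $\re[x]$, whereas the ideal $J=I_\cpx+\langle 1-yp\rangle$ you need it for lies in $\cpx[x,y]$. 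What you actually use is the weak Nullstellensatz over the algebraically closed field $\cpx$, which you rightly take as known; this does not affect the validity of the argument, but the citation should point to the complex version rather than to the paper's real-coefficient statement.
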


If an ideal $I$ has empty variety $V(I)$, then $1\in I$.
This is precisely the Hilbert's weak Nullstellensatz.
\begin{theorem} [Hilbert's Weak Nullstellensatz] \label{Null-Weak}
Let $I\subset \re[x]$ be an ideal. If $V(I) = \emptyset$,
then $1 \in I$.
\end{theorem}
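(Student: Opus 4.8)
The plan is to obtain this statement as an immediate corollary of Hilbert's Strong Nullstellensatz (Theorem~\ref{Null-Strong}), which is already at our disposal. The crucial point is that Theorem~\ref{Null-Strong} is stated without any nonemptiness hypothesis on $V(I)$: it asserts that \emph{every} $p \in \re[x]$ vanishing on $V(I)$ satisfies $p^k \in I$ for some integer $k>0$. So the whole argument reduces to checking that the constant polynomial $1$ falls under this hypothesis.

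First I would observe that when $V(I) = \emptyset$, the condition ``$p$ vanishes on $V(I)$'' holds vacuously for \emph{any} $p \in \re[x]$, since there are no points of $V(I)$ at which to test a value. In particular it holds for $p \equiv 1$. Applying Theorem~\ref{Null-Strong} to $p = 1$ then yields $1 = 1^k \in I$, which is exactly the desired conclusion. This is essentially a one-line deduction, and I expect no genuine obstacle; the only thing to be careful about is the logical point that the empty variety makes the vanishing hypothesis vacuously true, so that the constant $1$ qualifies.

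For completeness I would also record a self-contained route that does not invoke the strong form, in case one prefers an independent proof. One extends $I$ to the ideal $I^e = I\cdot\cpx[x]$ generated in $\cpx[x]$; since $V(I)$ is by definition the complex zero set, $V(I^e) = V(I) = \emptyset$, and the classical weak Nullstellensatz over the algebraically closed field $\cpx$ gives $1 \in I^e$. Writing $1 = \sum_j a_j p_j$ with $a_j \in \cpx[x]$ and $p_j \in I$, and then splitting $a_j = b_j + \sqrt{-1}\,c_j$ with $b_j,c_j \in \re[x]$, one gets $1 = \sum_j b_j p_j \in I$ by comparing real parts, because the $p_j$ are real. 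Here the only mildly technical step is this descent from $\cpx[x]$ back to $\re[x]$, which works precisely because $I$ is generated by real polynomials; everything else is routine.
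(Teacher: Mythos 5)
The paper does not actually prove this statement: it is quoted in the appendix as classical background (with pointers to \cite{CLO97,Ha}), so there is no ``paper's proof'' to match against. Both of your arguments are correct. The first route --- applying Theorem~\ref{Null-Strong} to $p\equiv 1$, whose vanishing on the empty variety is vacuous --- is a valid formal deduction from the Strong Nullstellensatz \emph{as stated in the paper}, since that statement carries no nonemptiness hypothesis; the only caveat worth recording is that in the standard development the implication runs the other way (the weak form plus the Rabinowitsch trick yields the strong form), so this derivation should not be mistaken for an independent proof of either. Within this paper that is harmless, as both theorems are simply quoted. Your second route is the genuinely self-contained one and is also sound: extending $I$ to $I\cdot\cpx[x]$ does not change the complex zero set (which is how $V(I)$ is defined here), the classical Nullstellensatz over the algebraically closed field $\cpx$ gives $1\in I\cdot\cpx[x]$, and the descent to $\re[x]$ by taking real parts of the coefficients works exactly because the generators $p_j$ lie in $\re[x]$. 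Either argument would serve as a proof of the statement as the paper uses it.
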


Now we consider $I$ to be an ideal generated by polynomials having real coefficients.
Let $T$ be a basic closed semialgebraic set.
There is a certificate for $V_\re(I) \cap T = \emptyset$.
This is the so-called Positivstellensatz.

\begin{theorem} [Positivstellensatz, \cite{Sten}] \label{Pos-Nul}
Let $I \subset \re[x] $ be an ideal,
and $T=\{x\in \re^n:\, g_1(x) \geq 0, \ldots, g_r(x) \geq 0 \}$
be defined by real polynomials $g_i$.
If $V_\re(I) \cap T = \emptyset$, then there exist SOS polynomials $\sig_\nu$ such that
\[
-1 \equiv \sum_{ \nu \in \{0,1\}^r } \, \sig_\nu \cdot g_1^{\nu_1} \cdots g_r^{\nu_r}
\quad \mbox{ mod } \quad I.
\]
\end{theorem}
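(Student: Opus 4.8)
The plan is to deduce this from the classical (Krivine--Stengle) \emph{emptiness form} of the Positivstellensatz by first absorbing the ideal $I$ into a finitely generated preordering. Writing $I = \langle p_1, \ldots, p_m \rangle$, I would form the preordering $P'$ of $\re[x]$ generated by the enlarged list $g_1, \ldots, g_r, p_1, -p_1, \ldots, p_m, -p_m$. Two observations make this reduction exact. First, $I \subseteq P'$: for any $q \in \re[x]$ one has $q\,p_j = \frac{1}{4}(q+1)^2 p_j + \frac{1}{4}(q-1)^2(-p_j)$, which is a sum of $(\text{square})\cdot(\text{generator})$ terms, so every element $\sum_j q_j p_j$ of $I$ lies in $P'$. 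Consequently the desired conclusion $-1 \equiv \sum_{\nu \in \{0,1\}^r} \sig_\nu\, g_1^{\nu_1}\cdots g_r^{\nu_r} \bmod I$ is equivalent to $-1 \in P'$ (reduce modulo $I$ in one direction; add the ideal element back in the other). Second, the basic closed set cut out by $P'$ is exactly $\{x: g_i(x)\ge 0,\ p_j(x)=0\} = V_\re(I) \cap T$. Thus the theorem becomes precisely the statement that a finitely generated preordering whose basic closed set is empty must contain $-1$.

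For that emptiness form I would argue by contrapositive using the theory of orderings. Suppose $-1 \notin P'$; the goal is to produce a real point in the allegedly empty set. Since $P'$ is a proper preordering (it contains every square, is closed under sums and products, yet misses $-1$), Zorn's lemma extends it to a maximal proper preordering $Q$, and a standard lemma shows that such a maximal $Q$ is an \emph{ordering}: $Q \cup (-Q) = \re[x]$ and the support $\mathfrak p := Q \cap (-Q)$ is a prime ideal. The ordering $Q$ then descends to a field ordering on the fraction field $F$ of the domain $\re[x]/\mathfrak p$. Passing to the real closure $R$ of $(F,\le)$, the composite $\re[x] \to \re[x]/\mathfrak p \hookrightarrow F \hookrightarrow R$ is an $\re$-algebra homomorphism sending each $g_i$ (and each $\pm p_j$) into the nonnegative cone of $R$. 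In other words, the images $\xi = (\bar x_1, \ldots, \bar x_n) \in R^n$ give a point of the system $p_j(\xi)=0,\ g_i(\xi)\ge 0$ \emph{over the real closed field} $R$.

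The final and genuinely hard step is to transfer this abstract solution over $R$ back to an honest solution over $\re$. This is the Artin--Lang homomorphism theorem, equivalently the Tarski--Seidenberg transfer principle asserting that a first-order sentence in the language of ordered fields holds over one real closed field iff it holds over any other. Applied to the finite system $p_j=0,\ g_i\ge 0$, solvability over $R$ forces solvability over $\re$, yielding $x^* \in V_\re(I)\cap T$ and contradicting the emptiness hypothesis. I expect this transfer to be the crux of the argument: the Zorn extension to an ordering and the descent to the real closure are formal manipulations, whereas converting an abstract ordered-field point into a genuine real zero is where the real algebraic geometry does the essential work. For completeness one notes the trivial converse, which is all that is used elsewhere in the paper: evaluating any representation $-1 \equiv \sum_\nu \sig_\nu\, g_1^{\nu_1}\cdots g_r^{\nu_r} \bmod I$ at a point of $V_\re(I)\cap T$ would give $-1 \ge 0$, so such a representation indeed certifies that $V_\re(I)\cap T = \emptyset$.
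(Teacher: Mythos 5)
The paper does not prove this statement at all: Theorem~\ref{Pos-Nul} is quoted in the appendix as a classical result of Stengle \cite{Sten}, with no argument supplied. So there is nothing in the paper to compare your proof against line by line; what can be said is that your sketch is the standard Krivine--Stengle proof from the real-algebra literature, and it is correct in outline. Your reduction is clean and exact: the identity $q\,p_j = \tfrac14(q+1)^2p_j + \tfrac14(q-1)^2(-p_j)$ does put $I$ inside the preordering $P'$ generated by $g_1,\ldots,g_r,\pm p_1,\ldots,\pm p_m$, the basic closed set of $P'$ is precisely $V_\re(I)\cap T$, and the two directions of the equivalence ``$-1\in P'$ iff the stated congruence holds mod $I$'' both check out (in one direction the terms of a representation of $-1$ in $P'$ involving a factor $p_j$ or $-p_j$ are absorbed into $I$; in the other, the ideal element is added back using $I\subseteq P'$). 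The remaining content is then exactly the emptiness form of the Positivstellensatz for a finitely generated preordering, and your contrapositive route --- extend the proper preordering to a maximal one, invoke the lemma that maximal proper preorderings are orderings with prime support, order the fraction field of $\re[x]/\mathfrak p$, pass to its real closure $R$, and transfer the resulting $R$-point back to $\re$ by Artin--Lang/Tarski model completeness --- is the textbook argument (Bochnak--Coste--Roy, Marshall, Prestel--Delzell). You are right to identify the transfer step as where the real work lives; the other load-bearing black box is the maximality lemma (whose proof rests on the observation that for a proper preordering $Q$ and any $a$, at least one of $Q+aQ$, $Q-aQ$ is proper). Since you invoke both as known results rather than proving them, your write-up is a correct reduction of the theorem to two standard pillars rather than a self-contained proof, which is an entirely reasonable level of detail for a result the paper itself only cites.
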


\begin{theorem} [Putinar's Positivstellensatz, \cite{Put}] \label{Put-Pos}
Let $I$ be an ideal of $\re[x]$ and
$T=\{x\in \re^n:\, g_1(x) \geq 0, \ldots, g_r(x) \geq 0 \}$
be defined by real polynomials $g_i$.
Suppose there exist $R>0$ and SOS polynomials $s_0(x),\ldots, s_m(x)$ such that
(the archimedean condition holds)
\[
R - \|x\|_2^2 \equiv s_0(x)+s_1(x)g_1(x)+\cdots+s_m(x)g_m(x)
\quad \mbox{ mod  } \quad I.
\]
If a polynomial $f(x)$ is positive on $V_\re(I) \cap T$,
then there exist SOS polynomials $\sig_i$ such that
\[
f(x) \equiv  \sig_0(x)+\sig_1(x)g_1(x)+\cdots+\sig_m(x)g_m(x)
\quad \mbox{ mod } \quad I.
\]
\end{theorem}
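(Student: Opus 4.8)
The plan is to prove this by the classical functional-analytic route, treating the ideal $I$ together with the quadratic module in one cone. Write $M$ for the quadratic module generated by $g_1,\ldots,g_m$, i.e. the set of all $\sig_0 + \sig_1 g_1 + \cdots + \sig_m g_m$ with each $\sig_i$ an SOS polynomial, and set $C = M + I$; the desired conclusion is exactly $f \in C$. First I would argue by contradiction: assume $f \notin C$. Since $C$ is a convex cone in the space $\re[x]$ (infinite-dimensional, but equipped with its finest locally convex topology, in which every linear functional is continuous), a Hahn--Banach / Eidelheit separation produces a nonzero linear functional $L:\re[x]\to\re$ with $L \geq 0$ on $C$ and $L(f) \leq 0$. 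Because $I$ is a subspace contained in $C$, having $L\geq 0$ on both $I$ and $-I$ forces $L|_I = 0$, so in particular $L\geq 0$ on $M$. A short argument from the archimedean hypothesis (which yields $N\pm p \in C$ for every $p$ and some $N>0$) shows $L(1)>0$, so after scaling I may take $L(1)=1$.

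The central step is to realize $L$ as integration against a measure supported on $T \cap V_\re(I)$. I would run the GNS-type construction: the form $\langle p, q \rangle := L(pq)$ is positive semidefinite, since $p^2 \in M$ gives $L(p^2)\geq 0$, so dividing out its kernel and completing yields a Hilbert space $H$ on which the multiplication operators $X_i : p \mapsto x_i p$ are densely defined, symmetric, and commuting. Multiplying the archimedean relation $R - \|x\|_2^2 \equiv s_0 + \sum_j s_j g_j \pmod I$ by $p^2$ keeps it in $C$, so $L\bigl((R-\|x\|_2^2)p^2\bigr)\geq 0$, giving the crucial bound $\langle (\sum_i X_i^2) p, p\rangle = L(\|x\|_2^2 p^2) \leq R\, L(p^2)$. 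Hence each $X_i$ extends to a bounded self-adjoint operator with spectrum in the ball of radius $\sqrt{R}$. The spectral theorem for commuting bounded self-adjoint operators furnishes a projection-valued measure, and pairing it against the cyclic vector $\mathbf{1}$ (the image of the constant $1$) produces a finite Borel measure $\mu$ with $L(p) = \int p\, d\mu$ for all $p$.

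The third step pins down the support and closes the loop. From $L(g_i q^2) = \int g_i q^2\, d\mu \geq 0$ for all $q$ one deduces $g_i \geq 0$ holds $\mu$-almost everywhere, hence $\supp{\mu} \subseteq T$; and from $\int h^2\, d\mu = L(h^2) = 0$ for every $h \in I$ one gets $h = 0$ $\mu$-a.e., so $\supp{\mu} \subseteq V_\re(I)$. Since $f$ is strictly positive on $T \cap V_\re(I)$, which the archimedean relation forces into the ball of radius $\sqrt{R}$ and is therefore compact, $f$ attains a positive minimum $c$ there, whence $L(f) = \int f\, d\mu \geq c\,\mu(\re^n) = c\, L(1) = c > 0$, contradicting $L(f)\leq 0$. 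This contradiction forces $f \in C = M + I$, which is the claimed representation.

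The hardest part will be the second step, constructing the representing measure, because that is where the archimedean condition does its real work: one must verify that the $X_i$ are genuinely bounded and self-adjoint (not merely symmetric), so that the spectral theorem applies and the functional $L$ truly comes from a measure on $\re^n$ rather than one leaking mass to infinity. A secondary subtlety is justifying the separation in the first step, since it implicitly uses that the archimedean cone $C$ is closed in the finest locally convex topology; I would supply this via the standard lemma that archimedean quadratic modules are closed in that topology (equivalently, that $c+\vareps \in C$ for all $\vareps>0$ implies $c\in C$), so that $f\notin C$ genuinely admits a separating functional.
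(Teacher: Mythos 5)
The paper does not actually prove this statement: it appears in the appendix as quoted background, attributed to Putinar \cite{Put}, so there is no internal proof to compare against. Your outline is the classical functional-analytic proof of Putinar's theorem (separation, GNS construction, boundedness of the multiplication operators from the archimedean relation, spectral theorem, localization of the support), correctly adapted to the module-plus-ideal formulation used here; your second and third steps are sound as written, including the observation that $L$ must vanish on the ideal and that the representing measure is therefore supported on $V_\re(I)\cap T$, which the archimedean relation traps in a compact ball.

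The one genuine problem is the lemma you invoke to justify the separation step. It is false that an archimedean quadratic module (or the cone $C=M+I$) is closed in the finest locally convex topology, and false that $c+\vareps\in C$ for all $\vareps>0$ implies $c\in C$. Indeed, granting the theorem you are proving, the set $\{c:\ c+\vareps\in C\ \forall\,\vareps>0\}$ is exactly the cone of polynomials \emph{nonnegative} on $V_\re(I)\cap T$, so your proposed lemma would upgrade the conclusion from ``positive'' to ``nonnegative'' on that set --- contradicting Scheiderer's result \cite{Sch99} quoted in the introduction of this very paper, which produces nonnegative polynomials on three-dimensional compact sets admitting no such representation. Fortunately the separation does not need closedness; it needs $C$ to have an algebraic interior point. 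The archimedean hypothesis supplies one: the set of $p$ with $N\pm p\in C$ for some $N>0$ is a subring of $\re[x]$ containing $\re$ and, by the relation $R-\|x\|_2^2\in C$, each coordinate $x_i$; hence it is all of $\re[x]$, so $1$ is an order unit and therefore an algebraic interior point of $C$. The Eidelheit--Kakutani separation theorem then yields a nonzero linear functional $L$ with $L\geq 0$ on $C$ and $L(f)\leq 0$, with $L(1)>0$ automatic because $1$ is interior. With that substitution the rest of your argument goes through.
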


\bigskip

In the following, we review some elementary background about resultants and discriminants.
More details would be found in \cite{CLO98,GKZ,Stu02}.

Let $f_1,\ldots, f_n$ be homogeneous polynomials in $x=(x_1,\ldots,x_n)$.
The resultant $Res(f_1,\ldots, f_n)$ is a polynomial
in the coefficients of $f_1, \ldots,f_n$ satisfying
\[
Res(f_1,\ldots,f_n) = 0 \quad  \Longleftrightarrow \quad
\exists \, 0 \ne u\in \cpx^n, \, f_1(u)=\cdots = f_n(u)=0.
\]
The resultant $Res(f_1,\ldots, f_n)$ is homogeneous, irreducible and has integer coefficients.
When $f(x)$ is a single homogeneous polynomial,
its discriminant is defined to be
\[
\Delta(f) \, = \, Res(\frac{\pt f}{\pt x_1},\ldots, \frac{\pt f}{\pt x_n}).
\]
Thus, we have the relation
\[
\Delta(f) = 0 \quad  \Longleftrightarrow \quad
\exists \, 0 \ne u\in \cpx^n, \, \nabla f(u)=0.
\]

The discriminants and resultants are also defined for inhomogeneous polynomials.
Let $f_0,f_1,\ldots, f_n$ be general polynomials in $x = (x_1,\ldots,x_n)$.
Their resultant $Res(f_0,f_1,\ldots, f_n)$ is then defined to be
$Res(\tilde{f_0}(\tilde{x}), \tilde{f_1}(\tilde{x}), \ldots, \tilde{f_n}(\tilde{x}))$,
where each $\tilde{f_i}(\tilde{x})=x_0^{\deg(f_i)}f(x/x_0)$ is the homogenization of $f_i(x)$.
Clearly, if the polynomial system
\[
f_0(x)=f_1(x)=\cdots=f_n(x)=0
\]
has a solution in $\cpx^n$, then the homogeneous system
\[
\tilde{f_0}(\tilde{x})=\tilde{f_1}(\tilde{x})
=\cdots=\tilde{f_n}(\tilde{x})=0
\]
has a nozero solution in $\cpx^{n+1}$, and hence $Res(f_0,f_1,\ldots, f_n)=0$.
The reverse is not always true,
because the latter homogeneous system might have a solution at infinity $x_0=0$.
If $f(x)$ is a single nonhomogeneous polynomial,
its discriminant is defined similarly as $\Delta(\tilde{f})$.

The discriminants are also defined for several polynomials.
More details are in \cite[Sec.~3]{NieDis}.
Let $f_1(\tilde{x}),\ldots, f_m(\tilde{x})$
be forms in $x=(x_1,\ldots,x_n)$ of degrees $d_1,\ldots,d_m$ respectively,
and $m \leq n-1$. Suppose at least one $d_i>1$.
The discriminant for $f_1,\ldots,f_m$, denoted by $\Delta(f_1,\ldots,f_m)$,
is a polynomial in the coefficients of $f_i$ such that
\[
\Delta(f_1,\ldots,f_m) \, = \, 0
\]
if and only if the polynomial system
\[
f_1(x) = \cdots = f_m(x) = 0
\]
has a solution $u \ne 0$ such that the matrix
$\bbm \nabla f_1(u) &  \cdots & \nabla f_m(u) \ebm$
does not have full rank.
When $m=1$, $\Delta(f_1,\ldots,f_m)$ reduces to
the standard discriminant of a single polynomial.

When $f_1,\ldots,f_m$ are nonhomogeneous polynomials in $x=(x_1,\ldots,x_n)$ and $m\leq n$,
the discriminant $\Delta(f_1,\ldots,f_m)$ is then defined to be
$\Delta(\tilde{f_1}(\tilde{x}), \ldots,\tilde{f_m}(\tilde{x}))$,
where each $\tilde{f_i}(\tilde{x})$ is the homogenization of $f_i(x)$.


\begin{thebibliography}{99}


\bibitem{Bsks}
D.~Bertsekas.
{\it Nonlinear Programming}, second edition.
Athena Scientific, 1995.


\bibitem{BrSch}
W.~Bruns and R.~Schw\"{a}nzl.
The number of equations defining a determinantal variety.
{\it Bull. London Math. Soc.} 22 (1990), no. 5, 439--445.


\bibitem{BrVe}
W.~Bruns and U. Vetter.
{\it Determinantal rings}.
Lecture Notes in Math. 1327, Springer, Berlin, 1988.


\bibitem{CLO97}
D.~Cox, J.~Little and D.~O'Shea.
{\it Ideals, varieties, and algorithms.
An introduction to computational algebraic geometry and commutative algebra.}
Third edition. Undergraduate Texts in Mathematics. Springer, New York, 1997.



\bibitem{CLO98}
D.~Cox, J.~Little and D.~O'Shea.
{\it Using algebraic geometry.}
 Graduate Texts in Mathematics, 185. Springer-Verlag, New York, 1998.


\bibitem{CuFi00}
R. Curto and L. Fialkow.
The truncated complex K-moment problem.
{\it Trans. Am. Math. Soc.}, 352, pp. 2825-2855 (2000).


\bibitem{DNP}
J. Demmel, J.Nie and V. Powers.
Representations of positive polynomials on non-compact semialgebraic sets via KKT ideals.
{\it Journal of Pure and Applied Algebra}, Vol. 209, No. 1, pp. 189-200, 2007.

%
%


\bibitem{GKZ}
I. Gel'fand, M. Kapranov, and A. Zelevinsky.
{\it Discriminants, resultants, and multidimensional determinants}.
Mathematics: Theory \& Applications, Birkh\"{a}user, 1994.


\bibitem{GEZ}
F.~Guo, M.S.~ El Din, and L.~Zhi.
Global Optimization of Polynomials Using Generalized
Critical Values and Sums of Squares.
{\it Preprint}, 2010.



\bibitem{HaPh}
H.~Ha and T.~Pham.
Global optimization of polynomials using the
truncated tangency variety and sums of squares.
{\sl SIAM J. Optim.} 19 (2008), no. 2, 941--951.


\bibitem{Ha}
J. Harris.
{\it Algebraic Geometry, A First Course.} Springer Verlag, 1992.


\bibitem{HLNZ}
S. He, Z. Luo, J.~Nie and S. Zhang.
Semidefinite Relaxation Bounds for Indefinite Homogeneous Quadratic Optimization.
{\it SIAM Journal on Optimization}, Vol. 19, No.~2, pp. 503-523, 2008.



\bibitem{HenLas05}
D. Henrion and J. Lasserre.
Detecting global optimality and extracting solutions in GloptiPoly.
{\it Positive polynomials in control} (D. Henrion, A. Garulli Eds.),
Lecture Notes on Control and Information Sciences, Vol.~312,
Springer, Berlin, 2005, pp. 293--310.


\bibitem{GloPol3}
D.~Henrion, J.~Lasserre and J.~Loefberg.
{\it GloptiPoly 3: moments, optimization and semidefinite programming}.
\url{http://homepages.laas.fr/henrion/software/gloptipoly3/}



\bibitem{Hiep10}
D.T.~Hiep.
Representations of non-negative polynomials via the critical ideals.
{\it Preprint}, 2010.
\url{http://www.maths.manchester.ac.uk/raag/index.php?preprint=0300}




\bibitem{Las01}
J. B.~Lasserre.
Global optimization with polynomials
and the problem of moments.
{\it SIAM J. Optim.}, \, 11(3): 796-817, 2001.

%
%
%

\bibitem{Mar09}
M.~Marshall.
Representation of non-negative polynomials, degree bounds
and applications to optimization.
{\it Can. J. Math.},\, 61 (1), 205-221, 2009.



\bibitem{NDS}
J.~Nie, J.~Demmel and B.~Sturmfels.
Minimizing polynomials via sum of squares over the gradient ideal.
{\it Math. Prog.}, Series A, Vol. 106, No. 3, pp. 587--606, 2006.

\bibitem{NS07}
J.Nie and M. Schweighofer.
On the complexity of putinar's positivstellensatz.
{\it Journal of Complexity} \, 23(2007), pp.135-150.


\bibitem{NieDis}
J.Nie. Discriminants and Nonnegative Polynomials.
{\it Preprint}, 2010.


%
%

\bibitem{ParMp}
P.~Parrilo.
Semidefinite programming relaxations for semialgebraic problems.
{\it Math. Prog.},\,  Ser. B, Vol.~96, No.2, pp. 293-320, 2003.


\bibitem{PS03}
P. A.~Parrilo and B.~Sturmfels.
Minimizing polynomial functions. In S. Basu and L. Gonzalez-Vega, editors,
{\it Algorithmic and Quantitative Aspects of Real Algebraic
Geometry in Mathematics and Computer Science, volume
60 of DIMACS Series in Discrete Mathematics and
Computer Science}, pages 83-99. AMS, 2003.



\bibitem{Put}
M. Putinar.
Positive polynomials on compact semi-algebraic sets,
{\it Ind. Univ. Math. J.} \, 42 (1993), 203–206.



\bibitem{Rez00}
B. Reznick.
Some concrete aspects of Hilbert's 17th problem.
{\it Contemp. Math.}, Vol.~253, pp. 251-272.
American Mathematical Society, 2000.


%
%

\bibitem{Sch99}
C.~Scheiderer.
Sums of squares of  regular functions on real algebraic varieties.
{\it Trans. Am. Math. Soc.}, \,  352, 1039-1069 (1999).

\bibitem{Smg}
K. Schm\"{u}dgen.
The K-moment problem for compact semialgebraic sets.
{\it Math. Ann.}  289 (1991), 203–206.

\bibitem{Swg04}
M. Schweighofer.
On the complexity of Schm\"{u}dgen's Positivstellensatz.
{\it Journal of Complexity} 20, 529-543 (2004).


\bibitem{Swg06}
M. Schweighofer.
Global optimization of polynomials using gradient tentacles and sums of squares.
{\sl SIAM Journal on Optimization} 17, No. 3, 920-942 (2006).

\bibitem{Sten}
G.~Stengle.
A Nullstellensatz and Positivstellensatz in semialgebraic geometry.
{\it Mathematische Annalen}\,  207, 87-97, 1974.

\bibitem{Stu02}
B.~Sturmfels.
{\it Solving systems of polynomial equations.}
CBMS Regional Conference Series in Mathematics, 97.
American Mathematical Society, Providence, RI, 2002.


\end{thebibliography}
\end{document}